\theoremstyle{plain}
\newcommand{\ri}{\rightarrow }
\newcommand{\Fm}{\widetilde{F}}
\DeclareMathOperator{\Coker}{Coker}\DeclareMathOperator{\Dis}{Dis}
 \DeclareMathOperator{\Hom}{Hom}
 \DeclareMathOperator{\Aut}{Aut}
\DeclareMathOperator{\Ext}{Ext} \DeclareMathOperator{\Ker}{Ker}
\DeclareMathOperator{\Ob}{Ob}
\DeclareMathOperator{\Obs}{Obs}\DeclareMathOperator{\Out}{Out}
\newtheorem{thm}{\bf Theorem}
\newtheorem{lem}[thm]{\bf Lemma} 
\newtheorem{pro}[thm]{\bf Proposition} 
\newtheorem{hq}[thm]{\bf Corollary} 
\theoremstyle{definition}
\begin{document}


\centerline{\Large\bf
Equivariant Crossed Modules and }
 \centerline{\Large\bf Cohomology of
Groups with Operators}
 \vspace{0.5cm}
\centerline{\textsc{Nguyen Tien Quang$^{1}$ and Pham Thi
Cuc$^{2,*}$}}
 \vspace{0.5cm}
  \noindent\textit{ $^1$Department of Mathematics, Hanoi National University of
Education, Hanoi, Vietnam}

\noindent \textit{$^2$Natural Science Department, Hongduc
University, Thanhhoa, Vietnam}

\renewcommand{\thefootnote}{}

\footnote{$^*$ Corresponding author.  \textit{Email adrdresses:}
cn.nguyenquang@gmail.com (N. T. Quang), cucphamhd@gmail.com (P. T.
Cuc)}

 \vspace{0.5cm}
\begin{abstract}
In this paper we study equivariant crossed modules in its link with
strict graded categorical groups. The resulting Schreier theory for
equivariant group extensions of the type of an equivariant crossed
module generalizes both the theory of group extensions of the type
of a crossed module and the one of equivariant group extensions.
\end{abstract}

\noindent{\small{\bf 2010 Mathematics Subject Classification:}
18D10, 18D30,  20E22, 20J06}

\noindent {\small{\bf Keywords:} $\Gamma$-crossed module, strict
graded categorical group, regular graded monoidal functor,
equivariant extension, equivariant cohomology}
\section{Introduction}

Crossed modules and categorical groups have been used widely and
independently, and in various contexts. Later,  Brown and Spencer
\cite{Br76} show   that crossed modules are defined by $\mathcal
G$-groupoids, and hence crossed modules can be studied by means of
category theory. The notion of $\mathcal G$-groupoid  is also called
{\it strict 2-group} by Baez and Lauda \cite{Baez}, or {\it strict
categorical group } by Joyal and Street \cite{J-S}.

A {\it  categorical group}  is a monoidal category in which every
morphism is invertible and every object has a weak inverse. (Here, a
weak inverse of an object $x$ is an object $y$ such that $x\otimes
y$ and $y\otimes x$ are both isomorphic to the unit object.) A {\it
strict}  categorical group is a strict monoidal category in which
every morphism is invertible and every object has a strict inverse
(so that $x\otimes y$ and $y\otimes x$ are actually equal to the
unit object).


 Graded categorical groups were
originally introduced by  Fr\"{o}hlich and Wall in \cite{Fro}.
  Cegarra et al  \cite{C2002} have proved a precise theorem on the homotopy
 classification of graded categorical groups and their
 homomorphisms thanks to the 3-dimensional equivariant cohomology group
 in the sense of
\cite{C20022}. These results were applied then to give an
appropriate treatment of the equivariant group extensions with a
non-abelian kernel in \cite{C2002}.


 Brown and Mucuk  classified group extensions of the type of a
  crossed module in \cite{Br94}. An another generalized version of group extension
 is the equivariant group extension stated by   Cegarra et al thanks
 to the graded categorical group theory
 (see \cite{C2002}). One can recognize a generalization of both theories
 by means of $\Gamma$-crossed modules and strict graded categorical groups, which we deal with in this work.

The plan of this paper, briefly, is as follows.
 After this introductory Section 1,  Section 2 is devoted to
 recalling some fundamental results and notions of  reduced
 graded categorical groups, the obstruction theory of a monoidal $\Gamma$-functor and a result on
 factor sets that will be used in the next section.
    In Section 3 we introduce the notion of {\it strict graded categorical group}, and show that
    any $\Gamma$-equivariant  crossed module is defined by a strict graded categorical group.
Then, we prove that the category $\bf{_{\Gamma}Grstr}$ of strict
$\Gamma$-graded categorical groups and regular $\Gamma$-graded
monoidal functors is equivalent to the category $\bf
{_{\Gamma}Cross}$  of $\Gamma$-equivariant crossed modules (Theorem
\ref{dlpl}). A morphism in the category $\bf {_{\Gamma}Cross}$
consists of a homomorphism of $\Gamma$-equivariant crossed modules,
$(f_1,f_0):\mathcal M\ri \mathcal M'$, and an element of the group
of equivariant 2-cocycles $Z^2(\pi_0\mathcal M, \pi_1\mathcal M').$
 This result contains a classical one,  Theorem 1 \cite{Br76}.

Last, Section 4 is dedicated to stating  Schreier theory for
equivariant group extensions of the type of a $\Gamma$-crossed
module by means of $\Gamma$-graded monoidal functors (Theorem
\ref{schr}, Theorem \ref{dlc}), then the classification theorem of
group extensions of the type of a crossed module of  Brown and Mucuk
   (Theorem 5.2 \cite {Br94}) and that of $\Gamma$-group extensions of   Cegarra et al (Theorem 4.1 \cite{C2002})
   are obtained as particular cases.

\section{ Reduced graded categorical groups}

Throughout  $\Gamma$ is a fixed group. Let us recall that a  {\it
$\Gamma$-group} $\Pi$ means a group $\Pi$ enriched with a left
$\Gamma$-action by automorphisms, and that an {\it (left)
$\Gamma$-equivariant $\Pi$-module} is a  $\Gamma$-module $A$, that
is, an abelian   $\Gamma$-group, endowed with a   $\Pi$-module
structure such that $\sigma(xa)=(\sigma x)(\sigma a)$ for all
$\sigma\in\Gamma$, $x\in \Pi$ and $a\in A$. A {\it
$\Gamma$-homomorphism} $f:\Pi\ri \Pi'$ of $\Gamma$-groups is a group
homomorphism satisfying $f(\sigma x)=\sigma f(x)$,
$\sigma\in\Gamma$, $x\in\Pi$.

 We regard  the group $\Gamma$ as a category with one object, say
 $\ast$, where the morphisms are elements of $\Gamma$ and the
 composition is the group operation. A category $\mathbb P$ is
 {\it $\Gamma$-graded} if there is a functor $gr:\mathbb P\ri\Gamma$.
  The grading $gr$ is said to be {\it stable} if for any
 object $X\in \mathrm{Ob}\mathbb P$ and any $\sigma\in \Gamma$
 there exists an isomorphism $f$ in $\mathbb P$ with domain $X$ and
$gr(f)=\sigma$. A {\it  $\Gamma$-graded   monoidal category}
$\mathbb P=(\mathbb
 G,gr,\otimes, I, \mathbf{a},\mathbf{l},\mathbf{r})$
 consists of:\\
 \indent 1.  a stable $\Gamma$-graded category $(\mathbb P,gr)$,
  $\Gamma$-graded functors
$\otimes:\mathbb P\times_\Gamma\mathbb P\rightarrow\mathbb P$ and
$I:\Gamma\rightarrow\mathbb P$,\\
\indent 2.  natural isomorphisms of grade 1 ${\bf a}_{X,Y,Z}:
(X\otimes Y)\otimes Z \stackrel{\sim}{\rightarrow}
 X\otimes (Y\otimes Z ), {\bf l}_X: I\otimes X \stackrel{\sim}{\rightarrow} X,
  {\bf r}_X: X\otimes I \stackrel{\sim}{\rightarrow} X$ such that,
  for all $X,Y,Z,T\in \Ob\mathbb P$, the following two coherence
  conditions hold:
\begin{equation*}
{\bf a}_{X,Y,Z\otimes T}{\bf a}_{X\otimes Y, Z,T}= (id_X \otimes
{\bf a}_{Y,Z,T}) {\bf a}_{X,Y\otimes Z,T}({\bf a}_{X,Y,Z}\otimes
id_T), \label{2.1}\end{equation*}  \begin{equation*}(id_X \otimes
{\bf l}_Y){\bf a}_{X,I,Y}= {\bf r}_X \otimes id_Y. \label{2.2}
\end{equation*}

A {\it graded categorical group} is a graded monoidal category
$\mathbb P$ in which every object is invertible and every morphism
is an isomorphism. In this case, the subcategory
  Ker$\mathbb P$ consisting of all objects of $\mathbb P$
and all morphisms of grade 1 in $\mathbb P$ is a categorical group.

If $\mathbb P,\mathbb P'$ are  $\Gamma$-monoidal categories, then a
{\it  graded monoidal functor}
 $(F,\widetilde{F},$ $ F_\ast):\mathbb P\ri\mathbb P'$ consists of a  $\Gamma$-graded functor
 $F : \mathbb P \rightarrow \mathbb P'$, natural isomorphisms of grade 1
$\widetilde{F}_{X,Y} :FX \otimes FY \rightarrow F(X\otimes Y),$ and
an isomorphism of grade 1
 $F_\ast : I' \rightarrow FI$,
such that,
  for all $X,Y,Z\in\Ob\mathbb P$, the following  coherence
  conditions hold:
\begin{equation*} \widetilde{F}_{X,Y\otimes Z}(id_{FX}\otimes
\widetilde{F}_{Y,Z}) {\bf a}_{FX,FY,FZ}= F({\bf
a}_{X,Y,Z})\widetilde{F}_{X\otimes Y,Z}(\widetilde{F}_{X,Y}\otimes
id_{FZ}), \label{6.5}
\end{equation*}
 \begin{equation*}F({\bf r}_X)\widetilde{F}_{X,I}(id_{FX}\otimes
F_\ast)= {\bf r}_{FX},\
 F({\bf l}_X)\widetilde{F}_{I,X}(F_\ast\otimes id_{FX}) = {\bf l}_{FX}. \label{6.6}
\end{equation*}
Let  $(F,\widetilde{F},F_\ast), (F',\widetilde{F}',F'_\ast)$ be two
$\Gamma$-graded monoidal functors. A {\it graded   monoidal natural
equivalence}   $\theta:F\stackrel{\sim}{\rightarrow} F'$ is a
natural equivalence  of functors such that  all isomorphisms
$\theta_X:FX\rightarrow F'X$ are of grade  1, and for all $X,Y\in
\Ob\mathbb P$, the following  coherence
  conditions hold:
\begin{align}\label{3.4}
\widetilde{F}'_{X,Y}(\theta_X\otimes\theta_Y)=\theta_{X\otimes
Y}\widetilde{F}_{X,Y},\;\theta_I F_\ast=F'_\ast.
\end{align}

The authors of \cite{C2002} showed that any  $\Gamma$-graded
categorical group $\mathbb P=(\mathbb
 G,gr,\otimes,$ $ I, \mathbf{a},\mathbf{l},\mathbf{r})$ determines a triple
$(\Pi,A, h)$, where\\
\indent 1. the set $\Pi=\pi_0\mathbb P$ of 1-isomorphism classes of the objects in $\mathbb P$ is a $\Gamma$-group,\\
 \indent 2. the set $A=\pi_1\mathbb P$ of 1-automorphisms of the unit object $I$ is a
$\Gamma$-equivariant $\Pi$-module,\\ 
 \indent 3. the third invariant is an equivariant cohomology class  $h\in
H^3_\Gamma(\Pi,A)$.

Based on these data, they constructed a   $\Gamma$-graded
categorical group $\mathcal S_\mathbb P$, denoted by
$\int_\Gamma(\Pi,A,h)$, which is graded monoidally equivalent to
$\mathbb P$. We call $\mathcal S_\mathbb P$ a {\it reduction} of the
$\Gamma$-graded categorical group $\mathbb P$.

Let $\mathbb P$ and $\mathbb P'$ be $\Gamma$-graded  categorial
groups, $\mathcal S_{\mathbb P}=\int_\Gamma(\Pi, A,h)$ and $\mathcal
S_{\mathbb P'}=\int_\Gamma(\Pi',A',h')$ be their reductions,
respectively. A $\Gamma$-functor $F: \mathcal S_{\mathbb P}\ri
\mathcal S_{\mathbb P'}$ is  {\it of type} $(\varphi,f)$ if
\begin{equation*}
F(x)=\varphi(x),\ \ F(a,\sigma)=(f(a), \sigma),\;\;x\in\Pi,\;a\in
A,\;\sigma\in\Gamma,
\end{equation*}
where $\varphi:\Pi\ri\Pi'$ is a $\Gamma$-homomorphism (so that $A'$
becomes a  $\Gamma$-equivariant $\Pi$-module via $\varphi$) and $f$
is a  homomorphism of $\Gamma$-equivariant $\Pi$-modules (that is, a
homomorphism which is both of $\Gamma$- and   $\Pi$-modules). In
this case, we call  $(\varphi,f)$   a {\it pair of
$\Gamma$-homomorphisms} and
 \begin{equation}\label{ct}
 \xi=\varphi^{\ast}h'-f_{\ast}h
 \end{equation}
  an {\it obstruction} of the $\Gamma$-functor $F$.



\begin{pro}[Theorem 3.2 \cite{C2002}\label{dl2.2a}]  Let $\mathbb P$ and $\mathbb P'$ be $\Gamma$-graded
 categorial groups, $\mathcal S_{\mathbb P}=\int_\Gamma(\Pi, A,h)$ and $\mathcal
S_{\mathbb P'}=\int_\Gamma(\Pi',A',h')$ be their reductions,
respectively.

 $\mathrm{i)}$ Every $\Gamma$-graded monoidal functor $(F,\widetilde{F}):\mathbb P\ri\mathbb P'$
 induces  one $S_F:\mathcal S_{\mathbb P}\ri \mathcal S_{\mathbb P'}$ of type $(\varphi,f)$.

 $\mathrm{ii)}$  Every $\Gamma$-graded monoidal functor $\mathcal S_{\mathbb P}\ri \mathcal S_{\mathbb P'}$
  is a $\Gamma$-functor of type $(\varphi,f)$.

$\mathrm{iii)}$   A $\Gamma$-graded   functor $F:\mathcal S_{\mathbb
G}\ri\mathcal S_{\mathbb P'}$ of type $(\varphi, f)$ is realizable,
that is, it induces a $\Gamma$-graded monoidal functor, if and only
if its obstruction  $\overline{\xi}$ vanishes in
   $H^3_\Gamma(\Pi, A')$. Then, there is a bijection
 \begin{equation*}
     {\mathrm{Hom}}_{(\varphi, f)}[\mathbb P, \mathbb P']\leftrightarrow H^2_\Gamma(\Pi,
 A'),
\end{equation*}
where $\Hom_{(\varphi, f)}[\mathbb P, \mathbb P']$ is the set of all
homotopy classes of monoidal
 $\Gamma$-functors from $\mathbb P$ to $ \mathbb P'$ inducing the
 pair of $\Gamma$-homomorphisms $(\varphi, f)$.
\end{pro}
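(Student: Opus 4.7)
The plan is to reduce every statement to an explicit calculation on the standard reduced graded categorical groups $\mathcal{S}_\mathbb{P}=\int_\Gamma(\Pi,A,h)$ and $\mathcal{S}_{\mathbb{P}'}=\int_\Gamma(\Pi',A',h')$, whose objects, morphisms, tensor product, associator, and unit constraints are completely determined by the normalized 3-cocycle data. The payoff is that any $\Gamma$-graded monoidal functor between them is specified by two set-theoretic maps $\varphi,f$ together with a structural natural isomorphism $\widetilde{F}$ and a unit isomorphism $F_{\ast}$, so the axioms translate directly into bar-complex equations.

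For part (i), I would exploit the fact, recalled before the statement, that there are $\Gamma$-graded monoidal equivalences $\mathbb{P}\simeq\mathcal{S}_\mathbb{P}$ and $\mathbb{P}'\simeq\mathcal{S}_{\mathbb{P}'}$. Conjugating $F$ by these equivalences produces a $\Gamma$-graded monoidal functor $S_F:\mathcal{S}_\mathbb{P}\to\mathcal{S}_{\mathbb{P}'}$. Its underlying graded functor is then of type $(\varphi,f)$ by part (ii), so (i) is entirely reduced to (ii). The maps $\varphi$ and $f$ are the ones induced by $F$ on $\pi_0$ and on $\pi_1$ respectively, and their $\Gamma$-equivariance comes from the fact that $F$ preserves grading.

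For part (ii), fix $(F,\widetilde{F},F_{\ast}):\mathcal{S}_\mathbb{P}\to\mathcal{S}_{\mathbb{P}'}$. On objects, set $\varphi(x)=F(x)$; stability of the grading together with functoriality show that $\varphi:\Pi\to\Pi'$ is a $\Gamma$-homomorphism. On a morphism $(a,\sigma):x\to{}^{\sigma}x$, the grading condition forces $F(a,\sigma)=(f_{x}(a),\sigma)$ for some map $f_{x}:A\to A'$. Composability in $\mathcal{S}_{\mathbb{P}'}$ forces $f_{x}$ to be a homomorphism of abelian groups. Applying the monoidal constraint $\widetilde{F}_{x,y}$ to a morphism of the form $(a,1)\otimes(0,1)$ and comparing with $(0,1)\otimes(a,1)$, and then using naturality and the normalization imposed by $F_{\ast}$, one sees that $f_{x}$ is independent of $x$ and the resulting $f:A\to A'$ is both $\Gamma$-equivariant and $\Pi$-linear with respect to the action of $\Pi$ on $A'$ pulled back along $\varphi$. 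This is exactly the type $(\varphi,f)$.

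For part (iii), I would proceed by obstruction calculus. Writing the coherence hexagon (\ref{6.5}) and the unit axioms (\ref{6.6}) for a hypothetical monoidal structure $\widetilde{F}_{x,y}=(t(x,y),1)$ on a $\Gamma$-functor of type $(\varphi,f)$, and using the explicit form of $\mathbf{a}$ in $\int_\Gamma(\Pi',A',h')$, yields after a direct bar computation the equation
\begin{equation*}
\delta t \;=\; \varphi^{\ast}h'-f_{\ast}h\;=\;\xi
\end{equation*}
in $C^{3}_{\Gamma}(\Pi,A')$. Hence a lift exists iff $\xi$ is a coboundary, iff $\overline{\xi}=0$ in $H^{3}_{\Gamma}(\Pi,A')$. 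When this holds, the set of monoidal structures on $F$ is a torsor over $Z^{2}_{\Gamma}(\Pi,A')$; passage to homotopy classes quotients by the action of 1-cochains, because by (\ref{3.4}) a graded monoidal natural equivalence $\theta$ modifies $\widetilde{F}$ exactly by the coboundary of the 1-cochain $x\mapsto\theta_{x}$. This delivers the stated bijection with $H^{2}_{\Gamma}(\Pi,A')$. The main obstacle is the bookkeeping in this last step: one must carefully normalize $F_{\ast}$, keep track of the $\Gamma$-action throughout, and verify that the 3-cochain produced by the hexagon is on the nose the bar representative of $\varphi^{\ast}h'-f_{\ast}h$; this is where the explicit description of $\int_\Gamma(\Pi,A,h)$ is indispensable.
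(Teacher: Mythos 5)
You should first note that the paper does not actually prove this Proposition: it is quoted as Theorem 3.2 of Cegarra et al.\ \cite{C2002}, with the remark that it can also be deduced from results of \cite{QCT}. So your proposal is being measured against the standard argument of those sources, and in broad strategy it does follow it: pass to the explicit cocycle models $\int_\Gamma(\Pi,A,h)$, translate monoidal-functor data into cochains, identify the obstruction $\xi=\varphi^{\ast}h'-f_{\ast}h$, and get the $H^2_\Gamma$-classification by a torsor/homotopy argument.

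There is, however, a genuine gap in your cochain bookkeeping. The groups $H^{n}_{\Gamma}(\Pi,A')$ here are the equivariant cohomology of groups with operators \cite{C20022}, whose cochains have \emph{mixed} components: a $2$-cochain is a pair $g(x,y),\,g(x,\sigma)$ with $x,y\in\Pi$, $\sigma\in\Gamma$, and a $3$-cocycle has components of types $(x,y,z)$, $(x,y,\sigma)$, $(x,\sigma,\tau)$; they are not ordinary bar cochains on $\Pi$. In the dictionary between functors and cochains the component $g(x,\sigma)$ is carried not by $\widetilde{F}_{x,y}$ but by the $A'$-part of the value of the graded functor on the graded morphisms $(0,\sigma)\colon x\to\sigma x$ (this is exactly how the present paper handles the analogous translation in Lemma \ref{t1} and Theorem \ref{dlpl}, where $f$ is defined on $D^2\cup D\times\Gamma$). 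This affects both (ii) and (iii). In (ii), your naturality argument pins down $f$ on grade-$1$ morphisms (independence of $x$, $\Pi$-linearity, $\Gamma$-equivariance), but it does not show $F(0,\sigma)=(0,\sigma)$; graded monoidal functors between the reductions with $F(0,\sigma)=(g(x,\sigma),\sigma)$, $g(x,\sigma)\neq 0$, do exist (they realize $2$-cochains with nontrivial $\Pi\times\Gamma$-component), so the literal ``of type $(\varphi,f)$'' conclusion needs an extra normalization/adjustment step beyond $F_{\ast}$. In (iii), if the underlying functor of type $(\varphi,f)$ is held fixed and only $\widetilde{F}_{x,y}=(t(x,y),1)$ varies, the coherence hexagon produces only the $\Pi^{3}$-component of the coboundary, and the monoidal structures form a torsor over the subgroup of cocycles with vanishing $\Pi\times\Gamma$-part, not over $Z^{2}_{\Gamma}(\Pi,A')$; your equation $\delta t=\xi$ therefore neither kills the $(x,y,\sigma)$- and $(x,\sigma,\tau)$-components of the obstruction nor yields the bijection with $H^{2}_{\Gamma}(\Pi,A')$. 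The repair is to let the realizing data be the full pair $(g(x,y),g(x,\sigma))$ — equivalently, to classify graded monoidal functors inducing $(\varphi,f)$ rather than monoidal structures on one fixed graded functor — and, when quotienting by homotopies $\theta$, to use their naturality with respect to graded morphisms as well, which is what makes the difference exactly an equivariant $2$-coboundary.
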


This result is  stated in Theorem 3.2 \cite{C2002} by Cegarra et al
by means of the notions of  $\Gamma$-pairs and of homomorphism of
$\Gamma$-pairs. It is also deduced from Propositions 4, 5, and
Theorem 6  \cite{QCT} with some appropriate modifications.

$\bullet$ {\it Definition of a factor set with coefficients in a
categorical group.}

The notion of factor set in the Schreier-Eilenberg-Mac Lane theory
for group extensions  has been raised to
 categorical level by Grothendieck \cite{Gro} and also applied in  \cite{U0}, \cite{C2001}, \cite
 {Q2012}. In this paper  we use this notion to define a {\it strict}
 graded categorical group.




\vspace{0.2cm} \noindent {\bf Definition.}  A {\it factor set}
$\mathcal F$  on $\Gamma$ with coefficients in a
  categorical group $\mathbb G$ (or a {\it pseudo-functor} from $\Gamma$ to the category of categorical groups
  in the sense of Grothendieck \cite{Gro}) consists of a family of    monoidal
autoequivalences $F^{\sigma}:\mathbb G\rightarrow \mathbb G,
\sigma\in\Gamma, $ and isomorphisms between   monoidal functors
$\theta^{\sigma,\tau} : F^\sigma F^\tau \rightarrow F^{\sigma\tau}
$, $ \sigma,\tau\in\Gamma$,
satisfying the conditions:\\
\indent i) $F^{1} = id_\mathbb G$,\\
\indent ii) $ \theta^{1,\sigma} = id_{F^\sigma} = \theta^{\sigma,1}$, $\sigma \in\Gamma$,\\
\indent iii) $\theta^{\sigma\tau,\gamma}\circ \theta^{\sigma,\tau}
F^{\gamma} =\theta^{\sigma,\tau\gamma}\circ
F^{\sigma}\theta^{\tau,\gamma}$, for all $
\sigma,\tau,\gamma\in\Gamma$.

We write $\mathcal F=(\mathbb G,F^{\sigma},\theta^{\sigma,\tau})$,
or simply $(F,\theta)$.

The following lemma comes from an analogous result on graded
monoidal categories  \cite{C2001} or a part of Theorem 1.2 \cite{U}.
 We sketch the proof since we need some of its details.

\begin{lem}\label{b01} Each $\Gamma$-graded categorical group
$(\mathbb{P},gr)$ determines a factor set $\mathcal F $ on $\Gamma$
with coefficients in the categorical group $\Ker\mathbb P$.
\end{lem}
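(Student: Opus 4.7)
My plan is to leverage the stability of the $\Gamma$-grading to lift every $\sigma\in\Gamma$ to a monoidal autoequivalence of $\Ker\mathbb{P}$. For each object $X\in\Ob\mathbb{P}$ and each $\sigma\in\Gamma$ I would choose, once and for all, a morphism $\gamma_X^\sigma: X \to {}^\sigma\! X$ of grade $\sigma$ in $\mathbb{P}$ (possible by stability), normalized by $\gamma_X^1 = id_X$. On the kernel I then define $F^\sigma(X) = {}^\sigma\! X$ on objects and, on a grade-$1$ morphism $u: X\to Y$, $F^\sigma(u) = \gamma_Y^\sigma\circ u\circ(\gamma_X^\sigma)^{-1}$. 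Since $gr$ is multiplicative this composite has grade $\sigma\cdot 1\cdot\sigma^{-1}=1$, so $F^\sigma$ indeed lands in $\Ker\mathbb{P}$, and functoriality is immediate.

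Next I would equip $F^\sigma$ with a monoidal structure. Define $\widetilde{F}^\sigma_{X,Y}: F^\sigma X\otimes F^\sigma Y \to F^\sigma(X\otimes Y)$ to be the unique grade-$1$ isomorphism satisfying $\widetilde{F}^\sigma_{X,Y}\circ(\gamma_X^\sigma\otimes\gamma_Y^\sigma) = \gamma_{X\otimes Y}^\sigma$ (existence and uniqueness come from the fact that two parallel morphisms of grade $\sigma$ in a graded categorical group differ by a grade-$1$ automorphism at source or target). Similarly set $F^\sigma_\ast: I \to F^\sigma I$. The coherence axioms for $(F^\sigma, \widetilde{F}^\sigma, F^\sigma_\ast)$ are then inherited from the coherence of $\mathbb{P}$, because after composing with the appropriate $\gamma$'s both sides of each pentagon/triangle reduce to $\gamma$'s and the structural isomorphisms of $\mathbb{P}$. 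For $\sigma,\tau\in\Gamma$ the endofunctors $F^\sigma F^\tau$ and $F^{\sigma\tau}$ both live on the same objects up to grade-$1$ isomorphism, so I define $\theta_X^{\sigma,\tau}: F^\sigma F^\tau X \to F^{\sigma\tau}X$ as the unique grade-$1$ morphism with $\theta_X^{\sigma,\tau}\circ\gamma^\sigma_{{}^\tau\! X}\circ\gamma_X^\tau = \gamma_X^{\sigma\tau}$.

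The factor-set axioms then follow by direct rewriting. Normalization $F^1 = id_{\Ker\mathbb{P}}$ and $\theta^{1,\sigma} = \theta^{\sigma,1} = id_{F^\sigma}$ are built into the choice $\gamma_X^1 = id_X$. Naturality of $\theta^{\sigma,\tau}$ in $X$ and its compatibility with $\widetilde{F}^\sigma, \widetilde{F}^\tau, \widetilde{F}^{\sigma\tau}$ are each checked by composing with the relevant $\gamma$'s: both sides then collapse to $\gamma_X^{\sigma\tau}$ (respectively $\gamma_{X\otimes Y}^{\sigma\tau}$), and grade-$1$ morphisms with a prescribed source, target and such a witness are unique. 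The cocycle condition (iii) is established by applying the defining relation of each $\theta$ to the two composites $\theta^{\sigma\tau,\gamma}\circ\theta^{\sigma,\tau}F^\gamma$ and $\theta^{\sigma,\tau\gamma}\circ F^\sigma\theta^{\tau,\gamma}$; both unwind to a grade-$1$ comparison between $\gamma^\sigma_{{}^{\tau\gamma}X}\circ\gamma^\tau_{{}^\gamma X}\circ\gamma^\gamma_X$ and $\gamma_X^{\sigma\tau\gamma}$, hence coincide by uniqueness.

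The main obstacle is bookkeeping rather than conceptual: one must track carefully which composites carry which grade when interleaving $F^\sigma$, $F^\tau$, the $\gamma$'s and the tensor structure, and verify monoidality of both the $F^\sigma$ and the $\theta^{\sigma,\tau}$ against $\widetilde{F}^\sigma$. No further choices beyond the initial family $\{\gamma_X^\sigma\}$ are needed, since everything else is pinned down by the uniqueness of grade-$1$ morphisms of prescribed source and target in the graded categorical group.
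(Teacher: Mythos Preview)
Your proposal is correct and follows essentially the same construction as the paper: your $\gamma_X^\sigma$ is the paper's $\Upsilon_X^\sigma$, and your definitions of $F^\sigma$, $\widetilde{F}^\sigma$, and $\theta^{\sigma,\tau}$ agree with the paper's (up to orientation conventions). The paper's proof is actually a bare sketch that stops after writing the formulas and asserting ``the pair $(F,\theta)$ constructed as above is a factor set''; your verification of the cocycle condition via precomposition with $\gamma^\sigma_{F^\tau F^\gamma X}\circ\gamma^\tau_{F^\gamma X}\circ\gamma^\gamma_X$ and cancellation is exactly the computation one would supply to complete it. One cautionary remark: your closing sentence about ``uniqueness of grade-$1$ morphisms of prescribed source and target'' is imprecise as stated (such morphisms are not unique; they form a $\pi_1$-torsor), but your actual arguments never use that false claim---they use cancellation of isomorphisms, which is fine.
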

\begin{proof}

For  $\sigma\in\Gamma$, we define a monoidal autoequivalence
$F^{\sigma} = (F^{\sigma},
    \widetilde{F}^{\sigma}):$
$\Ker\mathbb{P}\rightarrow \Ker\mathbb{P}$ as follows: for each
$X\in\Ker \mathbb P$, since the grading $gr$ is stable, there exists
an isomorphism
    $\Upsilon ^{\sigma}_{X} : X \stackrel\sim\rightarrow
  F^{\sigma}X$, where $F^{\sigma}X\in $ Ker$\mathbb{P}$, and
   $gr(\Upsilon ^{\sigma}_{X})=\sigma$. In particular, when  $\sigma = 1$
    we take $F^{1}X = X$ and $\Upsilon^{1}_{X} = id_{X}$.
A morphism $f:X\rightarrow Y$ of grade 1 in Ker$\mathbb{P}$ is
carried to
  the unique morphism $F^{\sigma}(f)$ in Ker$\mathbb{P}$
determined by
$$F^{\sigma}(f)=\Upsilon^{\sigma}_{Y}\circ f\circ(\Upsilon^{\sigma}_{X})^{-1}.$$
 The natural isomorphism  $\widetilde{F}^{\sigma}_{X,Y} :
{F^{\sigma}X} \otimes {F^{\sigma}Y} \stackrel\sim\longrightarrow
F^{\sigma}(X\otimes Y)$ is   determined by
$$\widetilde{F}^{\sigma}_{X,Y}=(\Upsilon^{\sigma}_{X}
\otimes \Upsilon^{\sigma}_{Y})\circ(\Upsilon^{\sigma}_{X\otimes
Y})^{-1}.$$
 Furthermore, for each pair $\sigma,\tau\in\Gamma$ there
is an isomorphism of monoidal functors  $ \theta^{\sigma,\tau}
:F^{\sigma}F^{\tau} \stackrel\sim\longrightarrow F^{\sigma\tau}$,
with $\theta^{1,\sigma} =id_{F^{\sigma}} =\theta^{\sigma,1}$,  which
is defined, for any $X \in \Ob\mathbb P$, by 
$$\theta^{\sigma,\tau}_{X}=\Upsilon^{\sigma}_{F^{\tau}X}\circ
\Upsilon^{\tau}_{X}\circ(\Upsilon^{\sigma\tau}_{X})^{-1}.$$ The pair
$(F,\theta)$ constructed as above is a factor set.
\end{proof}

\section{ Strict graded  categorial groups  and  $\Gamma$-crossed modules}

The objective of this paper is to extend the results on crossed
modules and on equivariant group extensions. The notion of
$\Gamma$-crossed module is a generalization of that of crossed
module of groups of Whitehead \cite{White49}. First,  observe that
if $B$ is a $\Gamma$-group, then the group Aut$B$ of automorphisms
of $B$ is also a $\Gamma$-group with the action
$$(\sigma f)(b)=\sigma(f(\sigma^{-1}b)),\;b\in B,\;f\in \Aut B.$$
Then, the homomorphism $\mu:B\ri \Aut B, b\mapsto \mu_b$ ($\mu_b$ is
the inner automorphism of $B$ given by conjugation with   $b$) is a
homomorphism of $\Gamma$-groups. Indeed, for all $\sigma\in\Gamma,
a,b\in B,$ one has
$$\mu_{\sigma b}(a)=\sigma
b+a-\sigma
b=\sigma(b+\sigma^{-1}a-b)=\sigma(\mu_b(\sigma^{-1}a))=(\sigma\mu_b)(a).$$

\vspace{0.2cm} \noindent {\bf Definition.} Let $B,D$ be
$\Gamma$-groups.  A {\it $\Gamma$-crossed module} is a quadruple
$(B,D,d,\vartheta)$, where $d:B\ri D,\;\vartheta:D\ri$ Aut$B$ are
$\Gamma$-homomorphisms satisfying the following conditions:
\\
\indent $C_1.\ \vartheta d=\mu,$\\
  \indent $C_2.\ \label{ct4b}d(\vartheta_x(b))=\mu_x(d(b)),$\\
\indent $C_3.\ \label{ct4c}\sigma(\vartheta_x(b))=\vartheta_{\sigma x}(\sigma b),$\\
where $\sigma\in\Gamma,x\in D, b\in B,$ $\mu_x$  is the inner
automorphism given by conjugation with $x$.

A $\Gamma$-crossed module is also called an equivariant crossed
module by Noohi \cite{N}.

 \vspace{0.2cm} \noindent {\bf Examples.} Standard
examples of $\Gamma$-crossed modules are:

1.  $(B,D,i,\mu)$, where  $i:B\ri D$ is an   inclusion
$\Gamma$-homomorphism of  a normal subgroup.

2.  $(B,D,{\bf 0},\vartheta)$, where $B$ is a $D$-module,   ${\bf
0}:B\ri D$ is the zero $\Gamma$-homomorphism, and $\vartheta$ is the
module action.

3. $(B,\Aut B,\mu,0)$, where  $\mu:B\ri \Aut B$ is the
$\Gamma$-homomorphism of any $\Gamma$-group $B$ which is given by
conjugation.

4.  $(B,D,p,\vartheta)$, where $p:B\ri D$ is  a $\Gamma$-surjective
such that $\Ker p\subset Z(B)$, $\vartheta$ is  given by
conjugation.

{\it Note on notations.} For convenience,  we denote by the addition
for the operation in $B$ and by the multiplication for that in $D$.
In this paper the $\Gamma$-crossed module $ (B,D,d,\vartheta)$ is
sometimes denoted by $B\stackrel{d}{\rightarrow}D$, or $B\rightarrow
D$. In this section notations $\mathcal M,\ \mathcal M'$ refer to
the $\Gamma$-crossed modules $(B,D,d,\vartheta)$,
$(B',D',d',\vartheta')$, respectively.

The following properties follow from the definition of
$\Gamma$-crossed module.
\begin{pro} \label{md2}
Let   $\mathcal M$ be a $\Gamma$-crossed module. \\
 \indent $\mathrm{i)}\;\mathrm{Ker}d$ is a $\Gamma$-subgroup in $Z(B)$.\\
 \indent $\mathrm{ii)} \;\mathrm{Im}d$ is both a normal subgroup in  $D$
and  a  $\Gamma$-group.\\
 \indent $\mathrm{iii)}$  The $\Gamma$-homomorphism $\vartheta$ induces
 one $\varphi:D\ri \mathrm{Aut}(\mathrm{Ker}d)$  by
$$\varphi_x=\vartheta_x|_{\mathrm{Ker}d}.$$
 \indent $\mathrm{iv)}\; \mathrm{Ker}d$ is a left $\Gamma$-equivariant
$\mathrm{Coker}d$-module under the actions
$$sa=\varphi_x(a),\ \sigma s=[\sigma x],\ a\in\mathrm{ Ker}d, \ x\in s\in \mathrm{Coker}d.$$
\end{pro}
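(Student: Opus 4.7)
The plan is to verify the four assertions in order, relying on the three defining axioms $C_1, C_2, C_3$ and the fact that $d$ is a $\Gamma$-homomorphism. Each item is essentially a routine consequence of one of these axioms, so the real content is in identifying which axiom is used where and verifying the well-definedness issues in (iv).

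For (i), I take $b\in\Ker d$ and apply $C_1$: $\mu_b=\vartheta_{d(b)}=\vartheta_{1}=id_B$, so $b\in Z(B)$. That $\Ker d$ is $\Gamma$-stable is immediate from the $\Gamma$-equivariance of $d$, since $d(\sigma b)=\sigma d(b)$. For (ii), the fact that $\Im d$ is a subgroup is automatic; normality follows from $C_2$ since $\mu_x(d(b))=d(\vartheta_x(b))\in\Im d$ for all $x\in D$, $b\in B$, and $\Gamma$-stability again uses $\sigma d(b)=d(\sigma b)$. For (iii), I must show $\vartheta_x$ carries $\Ker d$ into itself: for $a\in\Ker d$, $C_2$ gives $d(\vartheta_x(a))=\mu_x(d(a))=\mu_x(1)=1$, so $\vartheta_x|_{\Ker d}$ is a well-defined endomorphism; it is an automorphism because $\vartheta_{x^{-1}}$ provides the inverse.

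The main work is in (iv). First I check that $s\cdot a:=\varphi_x(a)$ does not depend on the representative $x\in s$. If $x'=x\cdot d(b)$ is another representative, then
\[
\varphi_{x'}(a)=\vartheta_{x\cdot d(b)}(a)=\vartheta_x(\vartheta_{d(b)}(a))=\vartheta_x(\mu_b(a))=\vartheta_x(a),
\]
where the third equality uses $C_1$ and the last uses part (i), namely $a\in Z(B)$ so $\mu_b(a)=a$. Next I check that $\sigma\cdot s:=[\sigma x]$ is well-defined: if $x'=x\cdot d(b)$, then $\sigma x'=(\sigma x)(\sigma d(b))=(\sigma x)\cdot d(\sigma b)$, so $[\sigma x']=[\sigma x]$ in $\Coker d$. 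The standard module and $\Gamma$-module axioms for these two actions are then straightforward from the fact that $\vartheta$ is a homomorphism and $\Gamma$ acts by automorphisms.

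The compatibility condition $\sigma(sa)=(\sigma s)(\sigma a)$ is exactly axiom $C_3$: choosing $x\in s$,
\[
\sigma(s\cdot a)=\sigma(\vartheta_x(a))=\vartheta_{\sigma x}(\sigma a)=(\sigma s)\cdot(\sigma a).
\]
The only genuine obstacle is the well-definedness of the $\Coker d$-action on $\Ker d$, and that is precisely where the conclusion $\Ker d\subset Z(B)$ from part (i) is needed; this is the reason the four statements are arranged in this order.
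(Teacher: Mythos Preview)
The paper states this proposition without proof, treating it as a routine consequence of the axioms $C_1$--$C_3$ and the $\Gamma$-equivariance of $d$ and $\vartheta$. Your argument is correct and supplies exactly the details one would expect: $C_1$ for (i), $C_2$ for (ii) and (iii), and $C_1$ together with (i) for the well-definedness of the $\Coker d$-action in (iv), with $C_3$ giving the equivariance identity $\sigma(sa)=(\sigma s)(\sigma a)$. There is nothing to compare against, and no gap in your write-up.
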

The groups $\mathrm{Ker}d$ and $\mathrm{Coker}d$ are denoted by
$\pi_1\mathcal M$ and $\pi_0\mathcal M$, respectively.

It is well known that each crossed module of groups can be seen as a
strict categorical group  (see  \cite{Br76}, \cite{J-S} Remark 3.1).
Crossed modules of groups can be {\it enriched} in some ways to
become, for example, {\it crossed bimodules over rings}, or {\it
equivariant crossed modules}. In the former case, each crossed
bimodule can be seen as a strict Ann-category \cite{QC}. In the
later case, we shall show that each crossed module of
$\Gamma$-groups can be identified with a {\it strict $\Gamma$-graded
categorical group}.  We now state this definition.

Recall that 
 if $(F, \Fm, F_\ast)$ is a monoidal functor between
categorical groups, then the isomorphism  $ F_\ast:I'\ri FI$ can be
deduced from
 $F$ and $\Fm$, so we can omit $F_*$ when not necessary.
 A monoidal functor  $(F, \Fm):\mathbb
G\ri\mathbb G'$ between two categorical groups is termed  {\it
regular} if
$$F(x)\otimes F(y)=F(x\otimes y), \ F(b)\otimes F(c)=F(b\otimes c),$$
for all $x,y\in $ Ob$\mathbb G$, $b,c\in$ Mor$\mathbb G$.
 A factor set  $(F,\theta)$ on $\Gamma $ with coefficients in a
categorical group   $\mathbb G$  is \emph{regular} if
$\theta^{\sigma,\tau} =id$ and $F^\sigma $ is a regular monoidal
functor, for all $\sigma \in \Gamma $.

\vspace{0.2cm}
\noindent {\bf Definition.} A graded categorical group $(\mathbb P,gr)$ is said to be {\it strict} if\\
\indent $\mathrm{i)}$ $\Ker \mathbb P$ is a strict categorical group,\\
\indent $\mathrm{ii)}$ $\mathbb P$ induces a regular factor set
$(F,\theta)$ on $\Gamma $ with coefficients in a categorical group
$\Ker \mathbb P$.

Equivalently, a graded categorical group $(\mathbb P,gr)$ is
\emph{strict} if it is a $\Gamma $-graded extension of a strict
categorical group by a regular factor set.

\vspace{0.2cm}
 $\bullet$ Construction of the strict $\Gamma$-graded categorical group
$\mathbb P_{\mathcal M}:=\mathbb P$ associated to the
$\Gamma$-crossed module $\mathcal M$.

Objects of $\mathbb P$ are elements of the group $D$, a
$\sigma$-morphism $x\ri y$ is a pair $(b,\sigma)$, where $b\in
B,\sigma\in\Gamma$ such that $\sigma x=d(b)y$.  The composition of
two morphisms is defined by
 \begin{equation}
\label{lh1}(x\stackrel{(b,\sigma)}{\ri}y\stackrel{(c,\tau)}{\ri}z)=(x\xrightarrow{(\tau b+c, \tau\sigma)}z).
\end{equation}This composition is associative and unitary since $B$ is a
$\Gamma$-group.

For any morphism $(b,\sigma)$ in $\mathbb P$, one has
$$(b,\sigma)^{-1}=(-\sigma^{-1}b,\sigma^{-1}),$$
so that $\mathbb P$ is a groupoid.

The tensor operation on objects is given by the multiplication in
the group $D$, and for two morphisms
$(x\stackrel{(b,\sigma)}{\ri}y), (x'\stackrel{(c,\sigma)}{\ri}y'),$
then
\begin{equation}\label{lh2}
(x\stackrel{(b,\sigma)}{\ri}y)\otimes(x'\xrightarrow{(c,\sigma)}y')=(xx'\xrightarrow{(b+\vartheta_yc,\sigma)}yy'),
\end{equation}
which is a functor thanks to the compatibility of the action
$\vartheta$ with the  $\Gamma$-action and conditions  of the
definition of    $\Gamma$-crossed module, as below.

For morphisms $(x\stackrel{(b,\sigma)}{\ri}y
\stackrel{(c,\tau)}{\ri}x), (x'\stackrel{(b',\sigma)}{\ri}y'
\stackrel{(c',\tau)}{\ri}z')$ in $\mathbb P$,
\[\begin{aligned}(x\stackrel{(b,\sigma)}{\ri}y
\xrightarrow{(c,\tau)} x)\otimes (x'\xrightarrow{(b',\sigma)} y'
\xrightarrow{(c',\tau)}
z')&\stackrel{(\ref{lh1})}{=}(x\xrightarrow{(\tau b+c,\tau\sigma)}
z)\otimes(x' \xrightarrow{(\tau b'+c',\tau\sigma)} z')\\
 \;&\stackrel{(\ref{lh2})}{=}(xx'\xrightarrow{(\tau b+c+\vartheta_z(\tau b'+c'),\tau\sigma)}
zz'),\end{aligned}\]
\[\begin{aligned} {[(x\xrightarrow{(b,\sigma)}y)\otimes(x'\xrightarrow{(b',\sigma)}y')]\circ} &
[(y\xrightarrow{(c,\tau)} z)\otimes(y'\xrightarrow{(c',\tau)}z')]\\
 \stackrel{(\ref{lh2})}{=}&(xx'\xrightarrow{(b+\vartheta_yb',\sigma)}yy')\circ(yy'\xrightarrow{(c+\vartheta_zc',\tau)}zz')\\
\stackrel{(\ref{lh1})}{=}&(xx'\xrightarrow{(\tau(b+\vartheta_yb')+c+\vartheta_zc',\tau\sigma)}zz').\end{aligned}\]
The fact that $\otimes$ is a functor is equivalent to
$$\tau
b+c+\vartheta_z(\tau b'+c')=\tau(b+\vartheta_yb')+c+\vartheta_zc'.$$
This follows from
 \[\begin{aligned}
\tau(\vartheta_yb') \stackrel{(C_3)}{=} \vartheta_{\tau y}(\tau b')
 =\vartheta_{(dc) z}(\tau b')
 \stackrel{(C_1)}{=}\mu_c(\vartheta_{ z}(\tau b')).\end{aligned}\]

The associativity and unit constraints with respect to  tensor
product are strict. 
The graded functor    is defined by $gr (b,\sigma)= \sigma, $ and
the unit graded functor $I: \Gamma \rightarrow \mathbb P$ by
\begin{align*} I(*\xrightarrow{\sigma } *)=(1\xrightarrow{(0,\sigma) } 1). \end{align*}
Since Ob$\mathbb P=D$ is a group and $x\otimes y=xy$, every object
of   $\mathbb P$ is invertible, whence $\mathrm{Ker}\mathbb P$ is a
strict categorical group.

We next show that $\mathbb P$ induces a regular factor set
$(F,\theta)$ on
 $\Gamma$ with coefficients in Ker$\mathbb P$.
For each $x\in D,\sigma\in\Gamma$, we set $F^\sigma(x)=\sigma x$,
  $\Upsilon ^\sigma_x =(x\xrightarrow{(0,\sigma)}\sigma x)$. Then, according to the proof of Lemma \ref{b01},
we have $F^\sigma(b,1)=(\sigma b,1)$ and $\theta^{\sigma,\tau}=id$.
Now, it follows from the $\Gamma$-crossed module structure of $B\ri
D$ that $F^\sigma$ is a regular monoidal functor.

Thus, $\mathbb P$ is a strict $\Gamma$-graded categorical group.

$\bullet$ Construction of the $\Gamma$-crossed module {\it
associated } the strict $\Gamma$-graded categorical group  $\mathbb
P$.

Set
\begin{equation} D= \Ob \mathbb P ,\;\;B=\{x\xrightarrow{b}1\ |\ x\in D,\  gr(b)=1 \}. \notag \end{equation}
The operations in  $D$ and in $B$ are given by
 $$xy=x\otimes y,\;\;b+c=b\otimes c, $$
respectively. Then, $D$ becomes a group in which the unit is $1$,
the inverse of $x$ is $x^{-1}$ ($x\otimes x^{-1}=1$). $B$ is a group
in which the zero is the morphism   $(1\xrightarrow{id_1} 1)$ and
the negative of $(x\xrightarrow{b} 1)$ is the morphism
 $(x^{-1}\xrightarrow{\overline{b}}  1 )(b\otimes
\overline{b}=id_1)$.

By the definition of $\mathbb P$, its kernel Ker$\mathbb P$ is a
strict categorical group and $\mathbb P$ has a regular factor set
$(F,\theta)$. Thus, $D,B$ are $\Gamma $-groups in which the actions
are respectively defined by
$$\sigma x= F^\sigma (x),\; x\in D, \sigma \in \Gamma, $$
$$\sigma b=F^\sigma (b),\ b\in B.$$
The correspondences $d:B\rightarrow D$ and $\vartheta: D\rightarrow
\Aut B$ are given by
 $$d(x\xrightarrow{b}  1)=x,$$
$$\vartheta _y(x\xrightarrow{b}1)= (yxy^{-1}\xrightarrow{id_y + b + id_{y^{-1}}}
1),$$ respectively. Since  $B,D$ are $\Gamma$-groups, it is easy to
see that $d,\vartheta$  are $\Gamma$-homomorphisms.

\vspace{0.2cm} \noindent{\bf Definition.} A {\it homomorphism}
$(f_1,f_0):\mathcal M\ri \mathcal M'$ of $\Gamma$-crossed modules
consists of
  $\Gamma$-homomorphisms $f_1:B\ri B'$,
$f_0:D\ri D'$ satisfying\\
\indent $H_1.\ \label{gr1} f_0d=d'f_1$,\\
\indent $H_2.\ \label{gr2}
f_1(\vartheta_xb)=\vartheta'_{f_0(x)}f_1(b),$\\
  for all $x\in D,b\in B$.

  The following lemmas state the relation between homomorphisms of
  $\Gamma$-crossed modules and graded monoidal functors between
  corresponding associated graded categorical groups. Observe that a morphism   $(x\xrightarrow{(b,\sigma)}y)$ in
   $\mathbb P_{B\ri D}$ can be written as
 $$x\xrightarrow{(0,\sigma)}\sigma
 x\xrightarrow{(b,1)}y,$$
 and a $\Gamma$-graded monoidal functor  $(F,\widetilde{F}):\mathbb P_{\mathcal M} \ri \mathbb P_{\mathcal M'}$
 defines a function  $f:D^2\cup D\times\Gamma\ri B$ by
 \begin{equation}\label{f}
 (f(x,y),1)=\widetilde{F}_{x,y},\quad (f(x,\sigma),\sigma)=F(x\stackrel{(0,\sigma)}{\rightarrow}\sigma x)
 \end{equation}

\begin{lem}\label{t1}
 Let
$(f_1,f_0):\mathcal M\ri \mathcal M'$ be a homomorphism of
$\Gamma$-crossed modules. Then, there exists a $\Gamma$-graded
monoidal functor
 $(F,\widetilde{F}):\mathbb P_{\mathcal M} \ri \mathbb P_{\mathcal
M'}$  defined by $F(x)=f_0(x),\;F(b,1)=(f_1(b),1),$ if and only if
$f=p^\ast\varphi,$ where $\varphi\in Z^2_\Gamma(\Coker d, \Ker d')$,
$p:D\ri \Coker d$ is a canonical projection.
  \end{lem}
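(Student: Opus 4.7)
The plan is to turn this into cochain bookkeeping: beyond the given $f_0$ and $f_1$, the remaining data of $(F,\widetilde F)$ is encoded by the single 2-cochain $f:D^2\cup D\times\Gamma\to B'$ defined in (\ref{f}). I will show that the three stages required by ``$f=p^\ast\varphi$ with $\varphi\in Z^2_\Gamma(\Coker d,\Ker d')$'' --- (1) values in $\Ker d'$, (2) factoring through $p$, (3) the equivariant 2-cocycle relations --- match the three structural requirements on $(F,\widetilde F)$ (respectively: being well-defined into $\mathbb P_{\mathcal M'}$, being compatible with composition, and satisfying the monoidal coherence axioms).

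For ($\Rightarrow$), the source/target constraint $\tau x'=d'(c)y'$ for a $(c,\tau)$-morphism in $\mathbb P_{\mathcal M'}$, together with $f_0$ being a group $\Gamma$-homomorphism, immediately forces $d'(f(x,y))=1=d'(f(x,\sigma))$, giving (1). For (2), functoriality of $F$ on the two decompositions
\begin{equation*}
(x\xrightarrow{(b,\sigma)}y)=(x\xrightarrow{(0,\sigma)}\sigma x\xrightarrow{(b,1)}y)=(x\xrightarrow{(\sigma^{-1}b,1)}\sigma^{-1}y\xrightarrow{(0,\sigma)}y)
\end{equation*}
of the same grade-$\sigma$ morphism (both valid by (\ref{lh1}) and the $\Gamma$-equivariance of $d$) yields $f(x,\sigma)=f(x',\sigma)$ whenever $p(x)=p(x')$, using that $\Ker d'$ lies in the center of $B'$ (Proposition \ref{md2}(i)); naturality of $\widetilde F$ against a pair of grade-1 morphisms, together with $H_1$, $H_2$ and (\ref{lh2}), collapses to $f(x,x')=f(y,y')$ whenever $p(x)=p(y)$ and $p(x')=p(y')$. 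Hence $f$ descends along $p$ to a well-defined $\varphi$ on $\Coker d$.

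For (3), the pentagon/associativity coherence of $\widetilde F$ (with strict associators) unwinds, via (\ref{lh1}) and (\ref{lh2}), to
\begin{equation*}
\vartheta'_{f_0(x)}f(y,z)-f(xy,z)+f(x,yz)-f(x,y)=0,
\end{equation*}
i.e.\ the standard 2-cocycle identity for $\varphi$ with respect to the $\Coker d$-action on $\Ker d'$ induced by the map $\overline{f_0}:\Coker d\to\Coker d'$. Functoriality of $F$ on the composite $(x\xrightarrow{(0,\sigma)}\sigma x\xrightarrow{(0,\tau)}\tau\sigma x)$ produces the crossed-homomorphism identity $f(x,\tau\sigma)=\tau f(x,\sigma)+f(\sigma x,\tau)$, and the naturality square of $\widetilde F$ across a pair of grade-$\sigma$ morphisms yields the mixed compatibility between $f|_{D^2}$ and $f|_{D\times\Gamma}$. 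Collected together, these are precisely the relations defining an equivariant 2-cocycle in the sense of \cite{C20022}.

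The converse is obtained by reversing each step. Given $\varphi\in Z^2_\Gamma(\Coker d,\Ker d')$, set $f=p^\ast\varphi$, put $\widetilde F_{x,y}=(f(x,y),1)$, and extend $F$ to every morphism via
\begin{equation*}
F\bigl(x\xrightarrow{(b,\sigma)}y\bigr)=\bigl(f_0(x)\xrightarrow{(f(x,\sigma)+f_1(b),\sigma)}f_0(y)\bigr);
\end{equation*}
the three stages of the cocycle conditions on $\varphi$ are exactly what is needed for well-definedness, functoriality, naturality of $\widetilde F$, and monoidal coherence. The main obstacle is purely clerical: correctly matching each coherence/naturality identity with its corresponding component of the equivariant 2-cocycle data, with the explicit composition and tensor formulas (\ref{lh1}), (\ref{lh2}) and the $\Gamma$-crossed module axioms $C_1, C_2, C_3$ carrying out the translation.
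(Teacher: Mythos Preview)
Your proposal is correct and follows essentially the same approach as the paper: both arguments reduce the existence of $(F,\widetilde F)$ to the cochain $f$ of (\ref{f}), then match the structural requirements on $F$ one-by-one against the conditions that $f$ take values in $\Ker d'$, factor through $p$, and satisfy the three equivariant cocycle identities (\ref{1s})--(\ref{3s}). The only noticeable difference is organizational --- you group the verification into three ``stages'' while the paper proceeds coherence condition by coherence condition --- and you derive the factoring of $f(\cdot,\sigma)$ through $p$ from the two decompositions of a single grade-$\sigma$ morphism, whereas the paper extracts it from the naturality square for $F$ against a grade-$1$ morphism; these are equivalent manipulations of the same functoriality constraint.
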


\begin{proof}
Since $f_0$ is a homomorphism and $Fx=f_0(x)$,
$\widetilde{F}_{x,y}:FxFy\ri F(xy)$ is a morphism of grade  1 in
$\mathbb P'$ if and only $df(x,y)=1'$, or $f(x,y)\in\Ker d'\subset
Z(B')$.

Also, since $f_0$ is a $\Gamma$-homomorphism,
$(Fx\xrightarrow{(f(x,\sigma),\sigma)}F\sigma x)$ is a morphism of
grade $\sigma$ in $\mathbb P'$ if and only if $df(x,\sigma)=1'$, or
$f(x,\sigma)\in\Ker d'\subset Z(B')$. In particular, when $\sigma=1$
then $f(x,1_\Gamma)=f_1(0)=0.$

The fact that $f_1$ is a group homomorphism is equivalent to the
condition of $F$ preserving composition of morphisms of grade  1.
The condition of $F$ preserving the composition of morphisms of form
$(0,\sigma)$ is equivalent to
\begin{equation}\label{1s}\tau f(x,\sigma)+f(\sigma x,\tau)=f(x,\tau\sigma).
\end{equation}

$\bullet$ The condition of $\widetilde{F}_{x,y}$ being  natural
isomorphisms.

- For morphisms of grade 1, consider the diagram
\begin{equation*}\label{bdtn1}
\begin{diagram}
\node{F(x)F(y)}\arrow{e,t}{\widetilde{F}_{x,y}}\arrow{s,l}{F(b,1)\otimes
F(c,1)}\node{F(xy)}\arrow{s,r}{F[(b,1)\otimes (c,1)]}\\
\node{F(x')F(y')}\arrow{e,b}{\widetilde{F}_{x',y'}}\node{F(x'y').}
\end{diagram}
\end{equation*}
Since the homomorphisms $f_0,f_1$ satisfy the condition $H_2$, the
following equation holds:
$$F(b,1)\otimes F(c,1)=F[(b,1)\otimes(c,1)].$$
%

Then, since $f(x,y),f(x',y')\in Z(B')$, the  above diagram commutes
if and only if
$$f(x,y)=f(x',y').$$
So, $\widetilde{F}$ defines a function $\varphi:\Coker^2d\ri\Ker
d',$
$$\varphi(r,s)=f(x,y),\ r=px,s=py$$
where $p:D\ri\Coker d$ is a canonical projection.

- For morphisms of form $(0,\sigma)$, the diagram
\begin{equation*}\label{bdtn2}
\begin{diagram}
\node{F(x)F(y)}\arrow{e,t}{\widetilde{F}_{x,y}}\arrow{s,l}{F(0,\sigma)\otimes
F(0,\sigma)}\node{F(xy)}\arrow{s,r}{F  [(0,\sigma)\otimes (0,\sigma)]}\\
\node{F(\sigma x)F(\sigma y)}\arrow{e,b}{\widetilde{F}_{\sigma
x,\sigma y}}\node{F(\sigma x)(\sigma y)=F\sigma(xy)}
\end{diagram}
\end{equation*}
 commutes if and only if
\begin{equation*} \sigma f(x,y)-f(\sigma x,\sigma
y)=f(x,\sigma)+\vartheta'_{F(\sigma x)}f(y,\sigma)-f(xy,\sigma),
\end{equation*}
or
\begin{equation}\label{2s} \sigma f(x,y)-f(\sigma x,\sigma
y)=f(x,\sigma)+(\sigma x)f(y,\sigma)-f(xy,\sigma).
\end{equation}
$\bullet$ The commutativity of diagram
\begin{equation*}\label{bdtn3}
\begin{diagram}
\node{Fx}\arrow{e,t}{(f(x,\sigma),\sigma)}\arrow{s,l}{F(b,1) }\node{F\sigma x}\arrow{s,r}{F(\sigma b,1)}\\
\node{Fy}\arrow{e,b}{(f(y,\sigma),\sigma)}\node{F\sigma y}
\end{diagram}
\end{equation*}
leads to
$$f(x,\sigma)+f_1(\sigma b)=\sigma f_1(b)+f(y,\sigma).$$
Since $f_1$ is a $\Gamma$-homomorphism, it follows that
$f(x,\sigma)=f(y,\sigma).$ This gives a function  $\varphi:\Coker
d\times\Gamma\ri \Ker d'$
$$\varphi(r,\sigma)=f(x,\sigma),\ r=px.$$
Therefore, one obtains a function
$$\varphi:\Coker^2d\cup\Coker d\times\Gamma\ri\Ker d'$$
which is normalized in the sense that
$$\varphi(1,s)=\varphi(r,1)=0=\varphi(r,1_\Gamma).$$
The first two equations hold since  $F(1)=1'$ and
$(F,\widetilde{F})$ is  compatible with the unit constraints. The
last one holds since $f(x,1_\Gamma)=0.$

The compatibility of   $(F,\widetilde{F})$ with the associativity
constraints implies
\begin{equation}\label{3s}\vartheta'_{Fx}(f(y,z))+f(x,yz)=f(x,y)+f(xy,z).\end{equation}
It follows from the relations (\ref{1s})-(\ref{3s}) that $\varphi\in
Z^2_\Gamma(\Coker d,\Ker d')$.
\end{proof}

Note that the strict $\Gamma$-graded categorical group  $\mathbb P$
induces a $\Gamma$-action on the group  $D$ of objects and on the
group   $B$ of morphisms of grade 1, we state the following
definition.

\vspace{0.2cm} \noindent{\bf Definition.} A $\Gamma$-graded monoidal
functor $(F,\widetilde{F}):\mathbb
P\ri\mathbb P'$ between two strict  $\Gamma$-graded categorical groups is called   {\it regular } if:\\
\indent $S_1.\ F(x\otimes y)=F(x)\otimes F(y),$\\
\indent $S_2.\ F(\sigma x)=\sigma F(x)$,\\
\indent $S_3.\ F(\sigma b)=\sigma F(b)$,\\
\indent $S_4.\ F( b)\otimes F( c)=F( b\otimes c),$\\
for $x, y\in \Ob\mathbb P,$ and $b,c$ are morphisms of grade  1 in
$\mathbb P$.

\vspace{0.2cm}

The $\Gamma$-graded monoidal functor mentioned in Lemma \ref{t1} is
 regular.

Thanks to Lemma \ref{t1}, one can define the category ${\bf _\Gamma
Cross}$ whose objects are $\Gamma $-crossed modules and whose
morphisms are triples $(f_1, f_0, \varphi)$, where $(f_1, f_0):
\mathcal M\rightarrow
  \mathcal M'$ is a homomorphism of  $\Gamma $-crossed
modules and $\varphi \in Z^2_{\Gamma}(\Coker d, \Ker d')$. The
composition with the morphism   $(f'_1, f'_0, \varphi'): \mathcal
M'\rightarrow
  \mathcal M''$ is given by
  $$(f_1',f_0',\varphi')\circ(f_1,f_0,\varphi)=(f_1'f_1,f_0'f_0, (f'_1)_\ast(\varphi)+f_0^\ast(\varphi')).$$

\begin{lem}\label{n1}
 Let $\mathbb P$ and
$\mathbb P'$ be corresponding strict $\Gamma $-graded categorical
groups associated to   $\Gamma$-crossed modules $\mathcal M$ and
$\mathcal M'$, and let $(F,\widetilde{F}):\mathbb P \ri \mathbb P'$
be a regular $\Gamma$-graded monoidal functor. Then, the triple
$(f_1,f_0,\varphi)$, where\\
\indent $\mathrm{i)}\ f_0(x)=F(x), (f_1(b),1)=F(b,1),  \sigma\in\Gamma,b\in B, x, y\in D,$\\
\indent $\mathrm{ii)}\ p^*\varphi=f$, for $f$ is given by \eqref{f},\\
is a morphism in the category  ${\bf _\Gamma Cross}$.
\end{lem}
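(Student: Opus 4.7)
The plan is to extract the data $(f_1,f_0,\varphi)$ directly from $(F,\widetilde{F})$ and then verify, using the regularity axioms $S_1$--$S_4$ together with the monoidal coherence of $(F,\widetilde F)$, each condition required to land in the category $\mathbf{_\Gamma Cross}$.

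First I would check that $f_0:D\to D'$ and $f_1:B\to B'$ are $\Gamma$-homomorphisms. The homomorphism property of $f_0$ follows from $S_1$ together with $F(1)=1'$ (which itself comes from $S_1$ applied to $1=1\otimes 1$, since in a strict categorical group the unit is determined as the only idempotent). The homomorphism property of $f_1$ follows from $S_4$, using that the zero of $B$ is $id_1$ and $F(id_1)=id_{1'}$. The $\Gamma$-equivariance of $f_0$ is $S_2$, and reading off the first component of $F(\sigma b,1)=(\sigma f_1(b),1)$ via $S_3$ gives the $\Gamma$-equivariance of $f_1$.

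Next I verify $H_1$ and $H_2$. For $H_1$: any $b\in B$ corresponds to a grade-$1$ morphism $(b,1):d(b)\to 1$ in $\mathbb P$, and since $F$ preserves the grading it sends $(b,1)$ to $(f_1(b),1):f_0(d(b))\to 1'$; by the very construction of the associated crossed module of $\mathbb P'$ this forces $f_0(d(b))=d'(f_1(b))$. For $H_2$: using formula \eqref{lh2}, the tensor $id_x\otimes(b,1)\otimes id_{x^{-1}}$ equals $(\vartheta_x(b),1):x\,d(b)\,x^{-1}\to 1$; applying $F$ and using $S_4$ together with the already established compatibility of $F$ with the tensor on objects ($S_1$), the same computation carried out in $\mathbb P'$ yields $(\vartheta'_{f_0(x)}f_1(b),1)$, giving $H_2$.

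Finally I need to show that the function $f$ of \eqref{f} descends to a normalized equivariant $2$-cocycle $\varphi\in Z^2_\Gamma(\Coker d,\Ker d')$ with $p^*\varphi=f$. Here I would reuse the calculations already performed in the proof of Lemma~\ref{t1} in reverse: the naturality of $\widetilde F_{x,y}$ with respect to grade-$1$ morphisms forces $f(x,y)$ to depend only on the classes $px,py\in\Coker d$, and the naturality of the isomorphisms $F(0,\sigma)$ with respect to grade-$1$ morphisms forces $f(x,\sigma)$ to depend only on $px$; this produces $\varphi$. Normalization $\varphi(1,s)=\varphi(r,1)=\varphi(r,1_\Gamma)=0$ follows from $F(id_1)=id_{1'}$ and $S_2$. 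The three identities \eqref{1s}, \eqref{2s}, \eqref{3s}---arising respectively from the functoriality of $F$ on the composition of morphisms of the form $(0,\sigma)(0,\tau)=(0,\tau\sigma)$, from the compatibility of $\widetilde F$ with the $\Gamma$-action on objects, and from the associativity coherence of $(F,\widetilde F)$---are precisely the equivariant $2$-cocycle conditions, so $\varphi\in Z^2_\Gamma(\Coker d,\Ker d')$. The main obstacle is purely organizational: to cleanly separate the data of $(F,\widetilde F)$ into a ``strict'' part governed by $S_1$--$S_4$ and yielding $(f_1,f_0)$, and a ``coherent'' part stored in $\widetilde F$ and $F(0,\sigma)$ yielding $\varphi$, after which the cocycle property is imported verbatim from Lemma~\ref{t1}.
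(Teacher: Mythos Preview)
Your proof is correct and follows essentially the same route as the paper: establish that $(f_1,f_0)$ is a $\Gamma$-crossed-module homomorphism from the regularity axioms, then invoke Lemma~\ref{t1} to obtain the cocycle $\varphi$. The only minor deviation is that you deduce the homomorphism property of $f_1$ from $S_4$ (tensor of grade-$1$ morphisms), whereas the paper uses functoriality of $F$ with respect to composition of grade-$1$ morphisms; both arguments are valid since in $\mathbb P_{\mathcal M}$ the composite and the tensor of $(b,1):db\to1$ and $(c,1):dc\to1$ both have first component $b+c$.
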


\begin{proof}
By the condition $S_1$, $f_0$ is a group homomorphism, and by the
condition $S_2$, $f_0$ is a $\Gamma$-homomorphism. Since $F$
preserves the composition of morphisms of grade 1, $f_1$ is a group
homomorphism. Moreover,   $f_1$ is a $\Gamma$-homomorphism thanks to
the condition $S_3$. Each element $b\in B$ can be seen as  a
morphism  $(db\xrightarrow{(b,1)} 1)$ in $\mathbb P$, and hence
$(f_0(db)\xrightarrow{(f_1(b),1)}1')$ is a morphism in $\mathbb P'$,
that means $H_1$ holds:
$$f_0(db)=d'(f_1(b)).$$ By the condition $S_4$ and the fact that $f_1$
is a homomorphism,   $H_2$ is satisfied:
\[
f_1(\vartheta_yc)=\vartheta'_{f_0(y)}f_1(c).\]

Thus, $(f_1,f_0)$ is a homomorphism of $\Gamma$-crossed modules. By
Lemma \ref{t1}, the function $f$ determines a function $\varphi\in
Z^2_{\Gamma}(\Coker d, \Ker d')$, where $f=p^{*}\varphi,$  $ p:D\ri
\Coker d$ is the canonical projection.
Therefore,  $(f_1,f_0,\varphi)$ is a morphism in ${\bf _\Gamma
Cross}$.
\end{proof}

Denote by
 $${\bf _\Gamma Grstr}$$
 the  category of strict  $\Gamma$-graded categorical groups and
regular  $\Gamma$-graded monoidal functors, we have the following
result. 
\begin{thm}[\label{dlpl}Classification Theorem]   There exists an equivalence
\[\begin{matrix}
 \Phi:{\bf _\Gamma Cross}&\ri&{\bf _\Gamma Grstr},\\
(B\ri D)&\mapsto&\mathbb{P}_{B\ri D}\\
(f_1,f_0, \varphi)&\mapsto&(F,\widetilde{F})
\end{matrix}\]
where $F(x)=f_0(x),\ F(b,1)=(f_1(b),1),$ and
$$F(x\stackrel{(0,\sigma)}{\rightarrow}\sigma x)=(\varphi(px,\sigma),\sigma),\
\widetilde{F}_{x,y}=(\varphi(px,py), 1),$$ for $x,y\in D, b\in
B,\sigma\in\Gamma$.
\end{thm}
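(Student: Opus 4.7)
The plan is to verify that $\Phi$, as described, is well-defined on both objects and morphisms, is functorial, and admits a quasi-inverse $\Psi$ built from the second construction of this section (passing from a strict $\Gamma$-graded categorical group to a $\Gamma$-crossed module). Writing the whole thing as a pair of mutually quasi-inverse functors, I would aim for $\Psi\circ\Phi=\mathrm{id}$ on the nose and $\Phi\circ\Psi\cong\mathrm{id}$ up to a natural regular graded monoidal isomorphism.

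First, I would establish that $\Phi$ is well-defined. On objects, the construction of $\mathbb P_{B\to D}$ already given produces a strict $\Gamma$-graded categorical group, so nothing further is required there. On morphisms, Lemma \ref{t1} gives the desired $\Gamma$-graded monoidal functor $(F,\widetilde F)$ attached to a triple $(f_1,f_0,\varphi)$, and it is plainly regular (conditions $S_1$--$S_4$) since $f_0,f_1$ are $\Gamma$-homomorphisms and $F$ is defined so as to preserve the tensor structure on objects and on morphisms of grade $1$. To check functoriality I would verify $\Phi(\mathrm{id}_{\mathcal M})=\mathrm{id}_{\mathbb P_\mathcal M}$ (immediate from the formulas, using $\varphi=0$) and compatibility with composition: given $(f_1,f_0,\varphi)\colon\mathcal M\to\mathcal M'$ and $(f_1',f_0',\varphi')\colon\mathcal M'\to\mathcal M''$, I would compute directly that $\Phi(f_1',f_0',\varphi')\circ\Phi(f_1,f_0,\varphi)$, applied to a morphism factored as $x\xrightarrow{(0,\sigma)}\sigma x\xrightarrow{(b,1)}y$, agrees with $\Phi\bigl((f_1'f_1,f_0'f_0,(f_1')_*\varphi+f_0^*\varphi')\bigr)$. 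Unravelling the defining formulas, this reduces to the cocycle identity $(f_1')_*\varphi(px,\sigma)+\varphi'(pf_0(x),\sigma)$ on the grade-$\sigma$ component and the analogous identity $(f_1')_*\varphi(px,py)+\varphi'(pf_0(x),pf_0(y))$ on the constraint, both of which are built into the composition law of ${\bf _\Gamma Cross}$.

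Next I would define the candidate quasi-inverse $\Psi\colon{\bf _\Gamma Grstr}\to{\bf _\Gamma Cross}$. On objects, $\Psi(\mathbb P)$ is the $\Gamma$-crossed module $(B,D,d,\vartheta)$ constructed in this section from a strict $\Gamma$-graded categorical group, using a fixed stable system of liftings $\Upsilon^\sigma_x$. On morphisms, Lemma \ref{n1} assigns to a regular $\Gamma$-graded monoidal functor $(F,\widetilde F)$ a triple $(f_1,f_0,\varphi)$ in ${\bf _\Gamma Cross}$; again functoriality is a direct check using that $F$ preserves tensor products of objects and of grade-$1$ morphisms. The easier of the two composites is $\Psi\circ\Phi$: starting from $\mathcal M=(B,D,d,\vartheta)$, the construction of $\mathbb P_{\mathcal M}$ has $\Ob\mathbb P_{\mathcal M}=D$, grade-$1$ morphisms $x\xrightarrow{(b,1)}1$ corresponding bijectively to elements $b\in B$ with $d(b)=x$, and the canonical choice $\Upsilon^\sigma_x=(x\xrightarrow{(0,\sigma)}\sigma x)$ reproduces the original $\Gamma$-actions and the maps $d,\vartheta$. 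Hence $\Psi\Phi(\mathcal M)=\mathcal M$ on the nose, and the morphism part is similarly a strict equality by Lemmas \ref{t1} and \ref{n1}.

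The main obstacle is the other composite $\Phi\circ\Psi$. Starting from an arbitrary strict $\Gamma$-graded categorical group $\mathbb P$, forming $\Psi(\mathbb P)=\mathcal M$ depends on the chosen liftings $\Upsilon^\sigma_x$, and rebuilding $\mathbb P_{\mathcal M}$ gives a categorical group whose objects coincide with $\Ob\mathbb P$ but whose morphisms are encoded as pairs $(b,\sigma)$ rather than literal arrows of $\mathbb P$. The plan is to exhibit a natural regular graded monoidal isomorphism $\Theta_{\mathbb P}\colon\mathbb P_{\mathcal M}\xrightarrow{\sim}\mathbb P$ sending an object $x$ to itself and sending $(x\xrightarrow{(b,\sigma)}y)$ to the composite $x\xrightarrow{\Upsilon^\sigma_x}\sigma x\xrightarrow{b}y$ in $\mathbb P$, where $b\in\Ker\mathbb P$ is the grade-$1$ morphism encoded by the symbol $b$. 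The tensor and associativity compatibilities of $\Theta_{\mathbb P}$ follow from the axioms $C_1$--$C_3$ of a $\Gamma$-crossed module once they are translated back through the formulas \eqref{lh1}--\eqref{lh2}; the gradedness and naturality in regular monoidal functors $(F,\widetilde F)$ follow from conditions $S_1$--$S_4$ together with the regularity of the factor set $(F,\theta)$, i.e.\ $\theta^{\sigma,\tau}=\mathrm{id}$. Assembling these pieces gives the natural isomorphism $\Phi\Psi\cong\mathrm{id}$ in ${\bf _\Gamma Grstr}$, completing the equivalence.
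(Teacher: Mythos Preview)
Your proposal is correct and fills in more detail than the paper's own proof, but it takes a somewhat different route. The paper argues that $\Phi$ is an equivalence by showing directly that it is fully faithful and essentially surjective: Lemma~\ref{t1} is read as saying that the assignment $(f_1,f_0,\varphi)\mapsto(F,\widetilde F)$ is injective on hom-sets, Lemma~\ref{n1} that it is surjective, and then essential surjectivity is dispatched in one line by asserting that for any strict $\Gamma$-graded categorical group $\mathbb P$ one has $\Phi(\mathcal M_{\mathbb P})=\mathbb P$ on the nose (using the ``equivalently'' form of the definition of strictness as a $\Gamma$-graded extension by a regular factor set). In particular, the paper does not construct a quasi-inverse functor, does not write down the natural isomorphism $\Theta_{\mathbb P}$, and does not explicitly verify functoriality of $\Phi$.

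Your approach, by contrast, builds $\Psi$ explicitly and checks both composites. This is more work but also more robust: you verify functoriality directly (a point the paper passes over), and your construction of $\Theta_{\mathbb P}$ makes precise what the paper's bare assertion $\Phi(\mathcal M_{\mathbb P})=\mathbb P$ is really saying, namely that the reconstructed graded categorical group is canonically identified with the original one via the chosen liftings $\Upsilon^\sigma_x$. The trade-off is that the paper's argument is shorter and stays closer to the two lemmas already proved, whereas yours gives a self-contained quasi-inverse at the cost of having to check naturality of $\Theta$ in regular graded monoidal functors. Both are standard ways of establishing an equivalence; your version would be the one to prefer if one were worried about implementation-level identifications of hom-sets.
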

\begin{proof} Let $\mathbb P,\mathbb P'$ be the   $\Gamma$-graded categorical groups associated to
 $\Gamma$-crossed modules $\mathcal M,\mathcal M'$, respectively. By Lemma \ref{t1},
the correspondence  $(f_1, f_0, \varphi)\mapsto (F,\widetilde{F})$
defines an injection on the homsets,
$$\Phi:\Hom_{{\bf _\Gamma Cross}}(\mathcal M, \mathcal M')\ri \Hom_{{\bf _\Gamma Grstr}}(\mathbb P_{\mathcal M},
\mathbb P_{\mathcal M'}).$$ By Lemma  \ref{n1},
 $\Phi$ is surjective.

If $\mathbb P$ is a strict $\Gamma$-graded  categorical group and
$\mathcal M_{\mathbb P}$ is its associated $\Gamma$-crossed module,
then $\Phi(\mathcal M_{\mathbb P})=\mathbb P$ (rather than an
isomorphism). Thus, $\Phi$ is an equivalence.
\end{proof}

{\it Remark.} Theorem \ref{dlpl} contains Theorem 1 \cite{Br76}.
Indeed, when  $\Gamma={\bf 1}$, the trivial group, one obtains an
equivalence
$$\Phi:{\bf  Cross}\ri{\bf  Grstr}.$$
In the category {\bf  Cross} the objects  are crossed modules, the
morphisms are triples  $(f_1, f_0, \varphi)$, where $(f_1, f_0):
\mathcal M\rightarrow
  \mathcal M'$ is a homomorphism of crossed modules and  $\varphi \in Z^2(\mathrm{Coker}d, \mathrm{Ker}d')$.
In the category {\bf  Grstr} the objects are strict categorical
group, the morphisms are regular monoidal functor.

Then, the category of  $\mathcal G$-groupoids (by Brown and Spencer
\cite{Br76}) is a subcategory of the category   ${\bf Grstr}$ in
which the morphisms consist of   monoidal functors
$(F,\widetilde{F})$ with $\widetilde{F}=id,$ and the category ${\bf
CrossMd}$ of crossed modules is the subcategory of the category
${\bf Cross}$  in which the morphisms consist of triples  $(f_1,
f_0, \varphi)$ with $\varphi=0$. These two categories are equivalent
via  $\Phi$. Thus, we obtain Theorem  1 \cite{Br76}.

\section {Equivariant group extensions,  $\Gamma$-crossed modules and equivariant group cohomology }

In this section we develop a theory  of equivariant group extensions
of the type of a $\Gamma$-crossed module which extends both group
extension theory of the type of a crossed module \cite{Br94,   Ded,
Tay} and equivariant group extension theory \cite{C2002}.

\vspace{0.2cm}
 \noindent{\bf Definition.} Let $ B\xrightarrow{d} D$ be a $\Gamma$-crossed
module and $Q$ be a $\Gamma$-group. An \emph{equivariant group
extension} of $B$ by $Q$ \emph{of type} $ B\xrightarrow{d} D$ is a
diagram of $\Gamma$-homomorphisms
\begin{align*}  \begin{diagram}
\xymatrix{\mathcal E:\;\;\;\;\;\; 0 \ar[r]& B \ar[r]^j \ar@{=}[d] &E  \ar[r]^p \ar[d]^\varepsilon & Q \ar[r]& 1, \\
& B \ar[r]^d & D}
\end{diagram}
\end{align*}
where the top row is exact,   the family $(B,E,j,\vartheta^0)$ is a
$\Gamma$-crossed module in which $\vartheta^0$ is given by
conjugation, and $(id,\varepsilon)$ is a homomorphism of
$\Gamma$-crossed modules.

\vspace{10pt} Two equivariant extensions of $B$ by $Q$ of  type
$B\xrightarrow{d}D$ are said to be \emph{equivalent} if there is a
morphism of exact sequences
\begin{align*}  \begin{diagram}
\xymatrix{ 0 \ar[r]& B \ar[r]^j \ar@{=}[d] &E  \ar[r]^p
\ar[d]^\alpha & Q \ar[r]\ar@{=}[d] & 1,&\;\;\; E
\ar[r]^\varepsilon&D \\
 0 \ar[r]& B \ar[r]^{j'}  &E'  \ar[r]^{p'}   & Q \ar[r]& 1,&\;\;\; E'
\ar[r]^{\varepsilon'}&D}
\end{diagram}
\end{align*}
such that $\varepsilon'\alpha=\varepsilon$. Obviously, $\alpha $ is
a $\Gamma$-isomorphism.

In the diagram
\begin{equation} \label{dccs} \begin{diagram}
\xymatrix{ 0 \ar[r]& B \ar[r]^j \ar@{=}[d] &E  \ar[r]^p \ar[d]^\varepsilon & Q \ar[r]\ar@{.>}[d]^\psi & 1, \\
 & B \ar[r]^d  &D  \ar[r]^q   & \text{Coker}d }
\end{diagram}
\end{equation}
where $q$ is a canonical  $\Gamma$-homomorphism, since the top row
is exact and $ q\circ \varepsilon \circ j = q\circ d =0,$ there  is
a  $\Gamma$-homomorphism $\psi: Q\rightarrow \text{Coker}d $ such
that the right hand side square commutes. Moreover, $\psi$ is
dependent only on the equivalence class of the extension $\mathcal
E$, and we say that the extension $\mathcal E$ {\it induces} $\psi$.
The set of equivalence classes of equivariant extensions of $B$ by
$Q$ of type $B\ri D$ inducing $\psi:Q\rightarrow \text{Coker}d$ is
denoted by
$$\Ext^\Gamma_{B\ri D}(Q,B,\psi).$$

Now, in order to study this set we apply the obstruction theory to
$\Gamma$-graded monoidal functors between strict $\Gamma$-graded
categorical groups $_\Gamma \Dis Q$ and $\mathbb P_{B\ri D}$, where
the {\it discrete} $\Gamma$-graded categorical group $_\Gamma \Dis
Q$ is defined by
$$_\Gamma \Dis Q =\int_\Gamma(Q, 0, 0).$$
This is just the strict $\Gamma$-graded categorical group associated
to the $\Gamma$-crossed module $(0,Q,0,0)$ (see Section 3). Thus,
the objects of $_\Gamma \Dis Q$ are the elements of $Q$  and its
morphisms   $ \sigma: x\rightarrow y$
  are the elements $\sigma \in\Gamma $ with $\sigma x=y$. Composition of morphisms is multiplication in   $\Gamma$.   The
graded tensor product is given by
\begin{align*} (x\xrightarrow{\sigma }y )\otimes (x'\xrightarrow{\sigma } y')=(xx'\xrightarrow{\sigma }yy' ).\end{align*}

We first prove the following lemma.

\begin{lem}\label{mrtc}
Let $B\ri D$ be a $\Gamma$-crossed module, and let $\psi: Q\ri
\Coker d$ be a $\Gamma$-homomorphism. For each $\Gamma$-graded
monoidal functor $(F,\widetilde{F}):\;_\Gamma\Dis Q\ri \mathbb
P_{B\ri D}$ which satisfies $F(1)=1$ and induces a pair of
$\Gamma$-homomorphisms $(\psi,0):(Q,0)\ri (\Coker d,\Ker d)$,  there
exists an equivariant group extension $\mathcal E_F$ of $B$ by $Q$
of type $B\ri D$ inducing $\psi$.
\end{lem}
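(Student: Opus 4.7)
My plan is to assemble $\mathcal E_F$ as a Schreier-type twisted product using the factor system encoded by $(F, \widetilde F)$. From the data of the $\Gamma$-graded monoidal functor, writing
\[
F(\sigma: x \to \sigma x) = (f(x, \sigma), \sigma), \qquad \widetilde F_{x, y} = (g(x, y), 1),
\]
yields functions $f : Q \times \Gamma \to B$ and $g : Q \times Q \to B$ satisfying $\sigma F(x) = d(f(x, \sigma))\,F(\sigma x)$ and $F(x) F(y) = d(g(x, y))\,F(xy)$. The hypothesis $F(1) = 1$ together with the unit constraints of $(F, \widetilde F)$ gives the normalizations $g(x, 1) = g(1, y) = 0$ and $f(x, 1_\Gamma) = 0$; the associativity coherence of $\widetilde F$, the functoriality of $F$ on composable grading morphisms, and the naturality of $\widetilde F$ with respect to these grading morphisms translate, respectively, into a $2$-cocycle identity for $g$, a cocycle identity for $f$, and a mixed compatibility between $f$ and $g$, of the same flavor as the identities extracted in the proof of Lemma \ref{t1}.

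Next I set $E := B \times Q$ as a set with
\[
(b, x)(c, y) := (b + \vartheta_{F(x)}(c) + g(x, y),\ xy), \qquad \sigma(b, x) := (\sigma b + f(x, \sigma),\ \sigma x),
\]
and structure maps $j(b) := (b, 1)$, $p(b, x) := x$, $\varepsilon(b, x) := d(b)\,F(x)$. Associativity of this product is exactly the $2$-cocycle identity on $g$; the normalizations provide the identity $(0, 1)$ and explicit inverses. The cocycle on $f$ yields compatibility of the $\Gamma$-action with composition in $\Gamma$, and the mixed $f$-$g$ identity yields that $\Gamma$ acts by automorphisms of the twisted multiplication. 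The maps $j$ and $p$ are $\Gamma$-homomorphisms fitting into a short exact sequence $0 \to B \to E \to Q \to 1$, and $\varepsilon$ is a $\Gamma$-homomorphism with $\varepsilon j = d$, because $d(f(x, \sigma))\,F(\sigma x) = \sigma F(x)$ and $d$ itself is a $\Gamma$-homomorphism.

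The step I expect to require the most care is verifying that $(B, E, j, \vartheta^0)$ is a $\Gamma$-crossed module and $(\mathrm{id}, \varepsilon)$ a morphism of $\Gamma$-crossed modules, namely that the conjugation $\vartheta^0$ of $B$ by $E$ agrees with $\vartheta \circ \varepsilon$. A direct expansion of $(c, x)(b', 1)(c, x)^{-1}$ using the twisted product gives $(\vartheta_{d(c)}\vartheta_{F(x)}(b'), 1)$; axiom $C_1$ of the crossed module $B \to D$ then identifies $\vartheta_{d(c)}$ with the inner automorphism $\mu_c$, so that $\vartheta_{d(c)F(x)}(b') = \vartheta_{\varepsilon(c, x)}(b')$, as required. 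Finally, since $q\varepsilon(b, x) = pF(x) = \psi(x) = \psi p(b, x)$, the extension $\mathcal E_F$ induces the given $\Gamma$-homomorphism $\psi$, completing the construction.
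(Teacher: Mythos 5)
Your proposal is correct and follows essentially the same route as the paper: extract the normalized factor-set data $f$ (and your $g$) from $(F,\widetilde F)$ together with the relations $\sigma F(x)=df(x,\sigma)F(\sigma x)$ and $F(x)F(y)=dg(x,y)F(xy)$, build the crossed product $B\times_f Q$ with exactly the same twisted multiplication and $\Gamma$-action, define $\varepsilon(b,x)=d(b)F(x)$, and check via $C_1$ that conjugation in the crossed product agrees with $\vartheta\circ\varepsilon$, so that the extension induces $\psi$. Aside from notational points (the paper uses a single function $f$ on $Q^2\cup Q\times\Gamma$, and your $q\varepsilon(b,x)=pF(x)$ should read $qF(x)$), this matches the paper's construction of the equivariant crossed product extension $\mathcal E_F$.
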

The extension $\mathcal E_F$ is called an {\it  equivariant crossed
product extension associated} to the $\Gamma$-graded monoidal
functor $F$.

\begin{proof}
Let $(F,\widetilde{F}):\; _\Gamma\Dis Q\ri \mathbb P$  be a
$\Gamma$-graded monoidal functor. By \eqref{f}, it defines a
function $f: Q\times Q \cup ( Q\times \Gamma )\rightarrow B$ which
is normalized in the sense that
\begin{align}\label{eq54}f(x,1_\Gamma)=0=f(x,1) =f(1,y).\end{align}
The first equality holds since $F$ preserves   identities, the rest
ones hold since $F(1)=1$ and $F$ is compatible with the unit
constraints.

It follows from the definition of  morphism in $\mathbb P$
 that
\begin{equation} \sigma F(x)=df(x,\sigma)F(\sigma x),\label{mt1} \end{equation}
\begin{equation} F(x)F(y)=df(x,y)F(xy).\label{mt2} \end{equation}
According to the proof of Lemma \ref{t1}, the function $f$ satisfies
the equations \eqref{1s}-\eqref{3s}, but  it here takes values in
  $B$  instead of $\Ker d'$.


$\bullet$ Construction of the  crossed product $E_0=B\times_f Q$.

 The   $\Gamma$-group structure of $E_0$ is given by the rules
$$(b,x)+(c,y)= (b+\vartheta_{Fx}(c) + f(x,y),xy),$$
$$\sigma (b,x)=(\sigma b+f(x,\sigma),\sigma x).$$
Thanks to the conditions (\ref{1s}), (\ref{eq54}) and (\ref{mt2}),
  $B\times_f Q$ is actually a group. The zero is $(0,1)$ and $-(b,x)=(b',x^{-1})$, where $\vartheta_{Fx}(b')=-b-f(x,x^{-1})$.
  Moreover, $E_0$ is a $\Gamma$-group owning to the conditions   \eqref{2s}, \eqref{3s} and (\ref{mt1}).

Then, the following sequence is exact
 $$\mathcal E_F:\;\ 0\ri B\stackrel{j_0}{\rightarrow}E_0\stackrel{p_0}{\rightarrow}Q\ri 1,$$
where $ j_0(b)=(b,1),  p_0(b,x)= x,  b\in B, x\in Q.$ Since $j_0(B)$
is a normal subgroup in $E_0$,  $j_0:B\ri E_0$ is a $\Gamma$-crossed
module in which the action  $\vartheta^0:E_0\ri \Aut B$ is given by
conjugation.

$\bullet$ Embedding $\mathcal E_F$ into the diagram (\ref{dccs}).

We first  define a $\Gamma$-homomorphism $\varepsilon:E_0\ri D$.
Since  $(F,\widetilde{F})$ induces a $\Gamma$-homomorphism
$\psi:Q\ri \Coker d$ by $\psi(x)=[Fx]\in\Coker d$, the elements $Fx$
are representatives of Coker$d$ in $D$. For $(b,x)\in E_0$, we set
\begin{equation}\label{ep}
\varepsilon(b,x)=db.Fx.
\end{equation}
Then, $\varepsilon$ is a $\Gamma$-homomorphism thanks to the
conditions   (\ref{mt1}) and (\ref{mt2}).

It is easy to see that $\varepsilon\circ j_0=d$. Besides, for all
$(b,x)\in E_0,c\in B$,  one has
$\vartheta_{(b,x)}^0(c)=\vartheta_{\varepsilon(b,x)}(c)$, as
calculated below:
$$\vartheta^0_{(b,x)}(c)=j^{-1}_0[\mu_{(b,x)}(c,1)]=\mu_b[\vartheta_{Fx}(c)],$$
$$\vartheta_{\varepsilon(b,x)}(c)=\vartheta_{db.Fx}(c)=\mu_b[\vartheta_{Fx}(c)].$$
Thus,  $\mathcal E_F$ is embedded into the diagram (\ref{dccs}).

Finally, for all $x\in Q$,
$$q\varepsilon(b,x)=q(db.Fx)= q(Fx)= \psi(x)=\psi p_0(b,x),$$
so that the extension  $\mathcal E_F$ induces the
$\Gamma$-homomorphism  $\psi: Q\rightarrow \text{Coker}\ d$.
\end{proof}
Under the hypothesis of Lemma \ref{mrtc}, we state the following
theorem.
\begin{thm} [Schreier  theory for equivariant group extensions of the type of a $\Gamma$-crossed
module]\label{schr}  There is a bijection
$$
\Omega:\mathrm{Hom}_{(\psi,0)}[_\Gamma\mathrm{Dis}Q,\mathbb
P]\rightarrow \mathrm{Ext}_{B\ri D}^\Gamma(Q, B,\psi).$$
\end{thm}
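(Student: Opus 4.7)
The plan is to define $\Omega$ by $[F] \mapsto [\mathcal E_F]$ via Lemma \ref{mrtc}, and to build an explicit inverse $\Lambda$ by extracting cocycle data from normalized set-theoretic sections of extensions. The argument has three ingredients: the construction of $\Lambda$, well-definedness on both sides, and checking that $\Omega\Lambda$ and $\Lambda\Omega$ are the identity.

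For $\Lambda$, given $\mathcal E: 0 \to B \xrightarrow{j} E \xrightarrow{p} Q \to 1$ of type $B \to D$, I would choose a normalized set-theoretic section $s: Q \to E$ with $s(1) = 0$. Since $p$ is a $\Gamma$-group homomorphism, the differences $s(x) + s(y) - s(xy)$ and $\sigma s(x) - s(\sigma x)$ lie in $\Ker p = jB$ and define normalized functions $f(x,y), f(x,\sigma) \in B$. I set $F_{\mathcal E}(x) = \varepsilon s(x)$, $F_{\mathcal E}(x \xrightarrow{\sigma} \sigma x) = (f(x,\sigma), \sigma)$, and $\widetilde F_{x,y} = (f(x,y), 1)$. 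Applying the $\Gamma$-homomorphism $\varepsilon$ to the defining identities for $f$ confirms these data are morphisms in $\mathbb P_{B\to D}$; the associativity and $\Gamma$-equivariance of $E$ yield relations \eqref{1s}--\eqref{3s}, which (as in the proof of Lemma \ref{t1}, now with values in all of $B$) are exactly the axioms for $(F_{\mathcal E}, \widetilde F_{\mathcal E})$ to be a $\Gamma$-graded monoidal functor inducing $(\psi, 0)$.

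For well-definedness: a different normalized section $s' = s + jt$ produces a functor homotopic to $F_{\mathcal E}$ via $\theta_x = (t(x), 1)$, and an equivalence of extensions $\alpha: \mathcal E \to \mathcal E'$ combined with the section $\alpha \circ s$ on $E'$ gives identical cocycles, so $\Lambda$ is well-defined on equivalence classes. Conversely, a graded monoidal natural equivalence $\theta: F \Rightarrow F'$ must have $\theta_x = (t(x), 1)$ with $F(x) = dt(x)\cdot F'(x)$, and the coherence conditions \eqref{3.4} translate precisely into the conditions needed for the bijection $\alpha: E_0 \to E'_0$, $(b, x) \mapsto (b + t(x), x)$, to be a $\Gamma$-equivariant group isomorphism over $Q$ satisfying $\varepsilon' \alpha = \varepsilon$, yielding $\mathcal E_F \simeq \mathcal E_{F'}$.

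Finally, to verify that $\Omega$ and $\Lambda$ are mutually inverse: for $\Lambda\Omega = \mathrm{id}$, the crossed product $E_0 = B \times_f Q$ has canonical section $s(x) = (0, x)$ whose cocycles recover the original $f$ of $F$, so $F_{\mathcal E_F} = F$; for $\Omega\Lambda = \mathrm{id}$, the map $B \times_f Q \to E$, $(b, x) \mapsto jb + s(x)$, is a $\Gamma$-equivariant group isomorphism intertwining $\varepsilon_{F_{\mathcal E}}$ from \eqref{ep} with $\varepsilon$. The main obstacle will be the bookkeeping in the construction of $\Lambda$: showing that the graded monoidal coherence axioms for $(F_{\mathcal E}, \widetilde F_{\mathcal E})$ correspond term-by-term to the group-theoretic identities satisfied by $s$. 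The crucial technical point is that axiom $C_1$ applied to the crossed module $j: B \to E$, combined with the compatibility $\vartheta^0_e = \vartheta_{\varepsilon(e)}$ supplied by the extension datum, ensures that the conjugation action of $s(x)$ on $B$ agrees with $\vartheta_{F_{\mathcal E}(x)}$, which is what makes the tensor functoriality \eqref{lh2} of $\mathbb P_{B\to D}$ pull back correctly.
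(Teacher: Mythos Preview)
Your proposal is correct and follows essentially the same route as the paper: the paper organizes the argument as ``homotopic iff equivalent'' (Step~1) plus surjectivity via a section $u_x$ (Step~2), while you package the same constructions as an explicit inverse $\Lambda$, but the underlying maps---section-induced cocycles $f(x,y),f(x,\sigma)$, the equivalence $(b,x)\mapsto(b+t(x),x)$ coming from a homotopy, and the isomorphism $(b,x)\mapsto jb+s(x)$---are identical to the paper's. Your identification of the crucial technical point, that $\vartheta^0_{s(x)}=\vartheta_{\varepsilon s(x)}=\vartheta_{F_{\mathcal E}(x)}$ via condition $H_2$, matches exactly what the paper uses.
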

\begin{proof}

{\it Step 1:  The $\Gamma$-graded monoidal functors
$F,F':\;_\Gamma\Dis Q\ri\mathbb P $ are homotopic if and only if the
corresponding associated equivariant extensions $\mathcal E_F,
\mathcal E_{F'}$ are equivalent.}

We first recall that every graded monoidal functor
$(F,\widetilde{F})$ is homotopic to one $(G,\widetilde{G})$ in which
$G(1)=1$. Hence, we can restrict our attention to this kind of
graded monoidal functors.

 Let $F, F': {_\Gamma\Dis Q}\ri\mathbb P$ be homotopic
by a homotopy $\alpha:F\ri F'$. Then, there exists a function
 $g:Q\ri B$ such that $\alpha_x=(g(x),1)$, that is,
\begin{equation}\label{dkmt}
Fx=dg(x)F'x.
\end{equation}
The naturality of $\alpha$ gives
\begin{equation}\label{ttn}
f(x,\sigma)+g(\sigma x)=\sigma g(x)+f'(x,\sigma).
\end{equation}
The coherence condition (\ref{3.4}) of the homotopy $\alpha$ implies
$g(1)=0$ and
\begin{equation}\label{dkk}
f(x,y)+g(xy)=g(x)+\vartheta_{F'x}g(y)+f'(x,y).
\end{equation}

By Lemma \ref{mrtc}, there exist extensions $\mathcal E_F$ and $
\mathcal E_{F'}$ associated to $F$ and $ F'$, respectively.  We
write
\begin{align*} \alpha^\ast: E_F&\rightarrow E_{F'}\\
(b,x)&\mapsto (b+g(x),x) \end{align*}

Then, thanks to the equations (\ref{ttn}) and (\ref{dkk}),
$\alpha^\ast$ is a   $\Gamma$-homomorphism. Further, the following
diagram commutes
\begin{align*}  \begin{diagram}
\xymatrix{ 0 \ar[r]& B \ar[r]^{j_0} \ar@{=}[d] &E_F  \ar[r]^{p_0}
\ar[d]^{\alpha^\ast} & Q \ar[r]\ar@{=}[d] & 1,&\;\;\; E_F
\ar[r]^\varepsilon&D \\
 0 \ar[r]& B \ar[r]^{j_0'}  &E_{F'}  \ar[r]^{p_0'}   & Q \ar[r]& 1,&\;\;\; E_{F'}
\ar[r]^{\varepsilon'}&D}
\end{diagram}
\end{align*}
and hence $\alpha^\ast$ is an isomorphism. It remains to show that
$\varepsilon'\alpha^\ast=\varepsilon$. It follows from the equations
\eqref{ep} and \eqref{dkmt} that
\[\begin{aligned}\varepsilon'\alpha^\ast(b,x)= &\varepsilon'(b+g(x),x)=d(b+g(x))F'x\\
= &d(b)d(g(x))F'x=d(b)Fx=\varepsilon(b,x).\end{aligned}\] Thus, two
extensions $\mathcal E_F$ and $\mathcal E_{F'}$ are equivalent.

Conversely, if $\alpha^*:E_F\rightarrow E_{F'}$ is an isomorphism,
 then $$\alpha^\ast(b,x)=(b+g(x),x),$$ where $g:Q\ri B$
is a function with $g(1)=0$. Thus, $\alpha_x=(g(x),1)$ is a homotopy
of $F$ and $F'$ as we see by retracing our steps.

{\it Step 2:  $\Omega $ is surjective.}

Assume that $\mathcal E$ is an equivariant extension of $B$ by $Q$
of type $B\rightarrow D$ inducing $\psi: Q\rightarrow \Coker d$ as
in the commutative diagram  (\ref{dccs}). We prove that $\mathcal E$
is equivalent to an extension $\mathcal E_F$  associated to some
$\Gamma$-graded monoidal functor
$(F,\widetilde{F}):\;_\Gamma\mathrm{Dis} Q\ri \mathbb P_{B\ri D}$.

For each  $x\in Q$, choose a representative $u_x\in E$ such that
$p(u_x)=x,\;u_1=0$. An element in  $E$ can be uniquely written  as
$b+u_x $, for $b\in B, x\in Q$.  The representatives
$\left\{u_x\right\}$ induce a normalized function   $f:Q\times Q\cup
Q\times\Gamma\ri B$ by
\begin{equation}    u_x+u_y=f(x,y)+u_{xy},\label{eq3'}\end{equation}
\begin{equation}
\sigma u_x=f(x,\sigma )+u_{\sigma x}.\label{eq4'} \end{equation} and
the automorphisms   $\varphi_x$  of $B$ by
$$\varphi_x=\mu_{u_x}:b\mapsto u_x+b-u_x.$$
It follows from the condition
 $H_2$ of the homomorphism $(id,\varepsilon)$ of
$\Gamma$-crossed modules that
\begin{equation*}    \vartheta_{\varepsilon u_x}=\mu_{ u_x}=\varphi_x. \label{}\end{equation*}

Then, the $\Gamma$-group structure of  $E$ can be described by
\begin{equation*} (b+u_x)+(c+u_y)= b+\varphi_x(c)+f(x,y)+u_{xy},
   \end{equation*}
\begin{equation*}
\sigma (b+u_x)=\sigma b +f(x,\sigma )+u_{\sigma x}.
  \end{equation*}

Since $\psi(x)=\psi p(u_x)=q\varepsilon(u_x)$, $\varepsilon(u_x)$ is
a representative of $\psi(x)$ in $D$. Thus,  we define a
$\Gamma$-graded monoidal functor
  $(F,\widetilde{F}):\;
\Dis_\Gamma Q\ri \mathbb P$ as follows.
  $$Fx=\varepsilon(u_x),\ F(x\xrightarrow{\sigma}\sigma x )=(f(x,\sigma),\sigma),\  \widetilde{F}_{x,y}=(f(x,y),1).$$
The equations  \eqref{eq4'} and  \eqref{eq3'}  show that
   $F(\sigma )$ and $\widetilde{F}_{x,y}$ are actually morphisms
   in
$\mathbb P$, respectively. The normality of the function
 $f(x,\sigma)$ gives
$F(id_x)= id_{Fx}$. Clearly, $F(1)=1.$ This together with the
normality of the function $f(x,y)$ imply the compatibility of
 $(F,\widetilde{F})$  with the unit constraints. The associativity law and the  $\Gamma$-group properties of $B$
imply the equations  \eqref{1s} - \eqref{3s}, respectively, in which
$\vartheta_{Fx}$ is replaced by
 $\varphi_x$. These equations show that
 $(F, \widetilde{F})$ is compatible with the associativity
 constraints,
 $\widetilde{F}_{x,y}$ is a natural isomorphism and
  $F$ preserves the composition of morphisms, respectively.

Finally, it is easy to check that the equivariant crossed product
extension $\mathcal E_F$ associated to
 $(F,\widetilde{F})$ is equivalent to the extension
 $\mathcal E$ by the $\Gamma$-isomorphism
$\alpha:(b,x)\mapsto b+u_x$.
\end{proof}

Moreover, each equivariant group extension of $B$ by
 $Q$  studied in \cite{C2002} may be viewed as an equivariant group extension
 of  type of
$\Gamma$-crossed module $(B,\Aut B,\mu,0)$. 
 Then,
$\mathbb P_{B\ri \Aut B}$ is just the {\it holomorph}
$\Gamma$-graded categorical group of a $\Gamma$-group $B$,
$\mathrm{Hol}_\Gamma B$.

\begin{hq} [Theorem 4.2 \cite{C2002}]\label{hq10'}  For $\Gamma$-groups $B$ and $Q$,
there exists a bijection
$$\mathrm{Hom}_\Gamma[\mathrm{Dis}_\Gamma Q,\mathrm{Hol}_\Gamma B]\rightarrow \mathrm{Ext}_\Gamma(Q, B).$$
\end{hq}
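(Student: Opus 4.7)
The plan is to deduce this from Theorem \ref{schr} by specialising to the $\Gamma$-crossed module $\mathcal M=(B,\mathrm{Aut}\,B,\mu,0)$ of Example 3, for which, by construction, $\mathbb P_{B\to\mathrm{Aut}\,B}=\mathrm{Hol}_\Gamma B$, $\mathrm{Ker}\,\mu=Z(B)$ and $\mathrm{Coker}\,\mu=\mathrm{Out}\,B$. The task is then to identify the classical set $\mathrm{Ext}_\Gamma(Q,B)$ of equivariant extensions with the union of the sets $\mathrm{Ext}^\Gamma_{B\to\mathrm{Aut}\,B}(Q,B,\psi)$ as $\psi$ ranges over $\mathrm{Hom}_\Gamma(Q,\mathrm{Out}\,B)$, and similarly to decompose $\mathrm{Hom}_\Gamma[{}_\Gamma\mathrm{Dis}\,Q,\mathrm{Hol}_\Gamma B]$ into subsets indexed by the same data.

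First I would show that the two notions of extension coincide. Given any classical equivariant extension $0\to B\xrightarrow{j}E\xrightarrow{p}Q\to 1$ of $\Gamma$-groups, the conjugation map $\varepsilon:E\to\mathrm{Aut}\,B$, $e\mapsto \mathrm{conj}_e|_B$, is a $\Gamma$-homomorphism (because the $\Gamma$-actions on $E$ and on $\mathrm{Aut}\,B$ are both twisted-conjugation by $\sigma\in\Gamma$), satisfies $\varepsilon j=\mu$, and automatically fulfils $H_2$ since the action of $\mathrm{Aut}\,B$ on $B$ is tautological. Conversely, given an extension of type $B\to\mathrm{Aut}\,B$, condition $H_2$ for $(\mathrm{id},\varepsilon)$ together with $H_1$ forces $\varepsilon$ to be precisely conjugation; equivalence of such enriched extensions is equivalent to equivalence of the underlying classical extensions. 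Hence the canonical assignment yields a bijection
\[
\mathrm{Ext}_\Gamma(Q,B)\;=\;\bigsqcup_{\psi\in\mathrm{Hom}_\Gamma(Q,\mathrm{Out}\,B)}\mathrm{Ext}^\Gamma_{B\to\mathrm{Aut}\,B}(Q,B,\psi),
\]
where $\psi$ is the outer action induced by each extension.

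Next I would decompose the functor side. By Proposition \ref{dl2.2a} every $\Gamma$-graded monoidal functor $F:{}_\Gamma\mathrm{Dis}\,Q\to\mathrm{Hol}_\Gamma B$ induces a pair of $\Gamma$-homomorphisms $(\varphi,f):(Q,0)\to(\mathrm{Out}\,B,Z(B))$; since the module part of the source is $0$, one necessarily has $f=0$, so
\[
\mathrm{Hom}_\Gamma[{}_\Gamma\mathrm{Dis}\,Q,\mathrm{Hol}_\Gamma B]\;=\;\bigsqcup_{\psi}\mathrm{Hom}_{(\psi,0)}[{}_\Gamma\mathrm{Dis}\,Q,\mathrm{Hol}_\Gamma B].
\]
Applying Theorem \ref{schr} to each summand assembles the desired bijection $\Omega$ piece by piece.

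The only non-routine point will be the identification of the two notions of extension: one must check that ``classical equivariant extension of $B$ by $Q$'' and ``equivariant extension of $B$ by $Q$ of type $B\to\mathrm{Aut}\,B$'' carry the same equivalence relation, i.e.\ that the enrichment by $\varepsilon$ is canonical and that two enriched extensions are equivalent in the sense of Section 4 iff their underlying classical extensions are equivalent. Once this is done, combining the two decompositions above with Theorem \ref{schr} gives the corollary without further work.
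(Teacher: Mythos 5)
Your proposal is correct and follows essentially the same route as the paper, which obtains this corollary by viewing each classical equivariant extension of $B$ by $Q$ as an extension of type $B\to \mathrm{Aut}\,B$ (with $\varepsilon$ given by conjugation), identifying $\mathbb P_{B\to\mathrm{Aut}\,B}$ with $\mathrm{Hol}_\Gamma B$, and then applying Theorem \ref{schr} over the various $\psi\in\mathrm{Hom}_\Gamma(Q,\mathrm{Out}\,B)$. Your write-up simply makes explicit the decomposition of both sides by $\psi$ and the matching of the two equivalence relations, which the paper leaves implicit.
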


 Let $\mathbb P= \mathbb P_{B\rightarrow D}$
be the  $\Gamma$-graded categorical group associated to the
$\Gamma$-crossed module   $B\rightarrow D$. Since $\pi_0\mathbb
P=\Coker d$ and $\pi_1\mathbb P=\Ker d$, the reduced graded
categorical group  of $ {\mathbb P}$ is
$$\mathcal S_{\mathbb P}= \int_\Gamma(\mathrm{Coker} d, \mathrm{Ker} d,h), \; h\in Z^3_\Gamma (\mathrm{Coker} d, \mathrm{Ker} d).$$
Then, by \eqref{ct},   $\Gamma$-homomorphism  $\psi:Q\rightarrow \mathrm{Coker} d$
induces an {\it obstruction} $$\psi^\ast h\in Z^3_\Gamma (Q,
\mathrm{Ker} d).$$

Under this notion of obstruction, we state the following theorem.

\begin{thm}\label{dlc} Let $(B,D,d,\vartheta)$ be a $\Gamma$-crossed module, and
let $\psi: Q\rightarrow \mathrm{Coker} d$ be a
$\Gamma$-homomorphism.
 Then, the vanishing of $ \overline{\psi^*h}$ in
$H^3_\Gamma (Q, \mathrm{Ker} d)$ is  necessary and sufficient for
there to exist an equivariant extension of $B$ by $Q$ of  type
$B\rightarrow D$ inducing $\psi$. Further, if $\overline{\psi^*h} $
vanishes, then the equivalence classes of such extensions are
bijective with $H^2_\Gamma(Q,\mathrm{Ker}d)$.
\end{thm}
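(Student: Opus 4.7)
The plan is to combine the Schreier-type bijection of Theorem \ref{schr} with the obstruction theory of Proposition \ref{dl2.2a}. The left-hand side of Theorem \ref{schr}, namely $\mathrm{Hom}_{(\psi,0)}[{_\Gamma\mathrm{Dis}}Q, \mathbb{P}_{B\to D}]$, is precisely a set of the form considered in Proposition \ref{dl2.2a}: it is the set of homotopy classes of $\Gamma$-graded monoidal functors inducing the prescribed pair of $\Gamma$-homomorphisms $(\varphi, f) = (\psi, 0)$ from $(Q,0)$ to $(\Coker d, \Ker d)$. So the whole problem reduces to applying Proposition \ref{dl2.2a} to a specific source and target.

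First I would identify the two reductions. The source ${_\Gamma\mathrm{Dis}}Q$ was defined as $\int_\Gamma(Q,0,0)$, so it is already in reduced form, with invariants $(\Pi, A, h) = (Q, 0, 0)$. The target $\mathbb{P} = \mathbb{P}_{B\to D}$ has $\pi_0\mathbb{P} = \Coker d$ and $\pi_1\mathbb{P} = \Ker d$, so its reduction is $\mathcal{S}_{\mathbb{P}} = \int_\Gamma(\Coker d, \Ker d, h)$ for some class $h \in Z^3_\Gamma(\Coker d, \Ker d)$, as remarked just before the theorem. The pair $(\psi, 0)$ is an admissible pair of $\Gamma$-homomorphisms from $(Q,0)$ to $(\Coker d, \Ker d)$ in the sense of Section 2, trivially, since the source module is zero.

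Next I would compute the obstruction of a $\Gamma$-functor $\mathcal{S}_{{_\Gamma\mathrm{Dis}}Q} \to \mathcal{S}_{\mathbb{P}}$ of type $(\psi,0)$. By the formula \eqref{ct}, this obstruction is
\[
\xi \;=\; \psi^{*}h - 0_{*}\cdot 0 \;=\; \psi^{*}h \;\in\; Z^3_\Gamma(Q,\Ker d).
\]
Proposition \ref{dl2.2a}(iii) then says that the functor is realizable as a $\Gamma$-graded monoidal functor $\mathbb{P}_{B\to D}$ (equivalently, by Theorem \ref{schr}, that an equivariant extension of $B$ by $Q$ of type $B\to D$ inducing $\psi$ exists) if and only if $\overline{\psi^{*}h}=0$ in $H^3_\Gamma(Q,\Ker d)$. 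When this obstruction vanishes, the same proposition provides the bijection
\[
\mathrm{Hom}_{(\psi,0)}[{_\Gamma\mathrm{Dis}}Q,\mathbb{P}_{B\to D}] \;\longleftrightarrow\; H^2_\Gamma(Q,\Ker d),
\]
and composing with the bijection $\Omega$ of Theorem \ref{schr} yields the claimed bijection between equivalence classes of extensions and $H^2_\Gamma(Q,\Ker d)$.

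The only delicate point I would have to justify carefully is that Proposition \ref{dl2.2a}, which is stated for graded monoidal functors between arbitrary $\Gamma$-graded categorical groups and their reductions, may be applied directly to the pair $({_\Gamma\mathrm{Dis}}Q, \mathbb{P}_{B\to D})$ without loss. This is immediate for the source, since ${_\Gamma\mathrm{Dis}}Q$ coincides with its own reduction, and for the target it follows from the graded monoidal equivalence $\mathbb{P}_{B\to D} \simeq \mathcal{S}_{\mathbb{P}}$, which transports homotopy classes of functors inducing $(\psi,0)$ bijectively. Apart from this bookkeeping, the proof is a direct composition of the two main results already established.
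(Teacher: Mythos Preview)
Your proposal is correct and takes essentially the same approach as the paper: combine the obstruction criterion of Proposition \ref{dl2.2a} with the Schreier bijection of Theorem \ref{schr}, after observing that the obstruction of the pair $(\psi,0)$ is precisely $\psi^{*}h$. The paper's argument differs only cosmetically in the necessity direction, where it passes through Lemma \ref{t1} applied to the crossed module $B\to E$ of a given extension rather than invoking the bijection $\Omega$ of Theorem \ref{schr} directly.
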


\begin{proof}  By the assumption,
$\overline{\psi^\ast h}=0$,  thus by Proposition \ref{dl2.2a}, there
exists
a $\Gamma$-graded monoidal functor $(\Psi,\widetilde{\Psi}):\;_\Gamma \Dis Q\ri \mathcal S_{\mathbb P}$. 
 Then, composition of $(\Psi, \widetilde{\Psi})$ and
$(H,\widetilde{H}): \mathcal S_{\mathbb P}\ri \mathbb P$ is a
$\Gamma$-graded monoidal functor $(F,\widetilde{F}):\;_\Gamma\Dis
Q\ri \mathbb P$. It is easy to see that $F$ induces the pair of
$\Gamma$-homomorphisms $(\psi,0)$, hence by Lemma \ref{mrtc}, we
obtain an associated extension $\mathcal E_F$.

Conversely, suppose that there is an equivariant extension as in the
diagram (\ref{dccs}). Let $\mathbb P'$ be the $\Gamma$-graded
categorical group associated to the $\Gamma$-crossed module $B\ri
E$.
 By Lemma \ref{t1}, there is a
$\Gamma$-graded monoidal functor $F:\mathbb P'\ri\mathbb P.$ Since
the reduced graded categorical group of $\mathbb P'$ is $_\Gamma
\Dis Q$,  $F$ induces a $\Gamma$-graded monoidal functor  of type
$(\psi,0)$ from $_\Gamma \Dis Q$ to $\mathcal S_{\mathbb
P}=\int_\Gamma(\Coker d,\Ker d,h)$.
 Now, by Proposition \ref{dl2.2a},
the obstruction of the pair $(\psi,0)$   vanishes  in
  $H^3_\Gamma(Q, \Ker d),$ that is,
$\overline{\psi^\ast h}=0$.

The final assertion  of the theorem follows from Proposition
\ref{dl2.2a} and Theorem  \ref{schr}.
\end{proof}

Note that if  $\Gamma={\bf 1}$, the trivial group, then  the set
$\Ext^{\bf 1}_{B\ri D}(Q,B,\psi)$ is just the set of equivalence
classes of group extensions of the type of a crossed module studied
in \cite{Br94,   Ded, Tay}. Thus, we obtain the following
consequence.

\begin{hq} [Theorem 5.2 \cite{Br94}]  Let $(B,D,d,\vartheta)$ be a crossed module, and
let $\psi: Q\rightarrow \mathrm{Coker} d$ be a group homomorphism.
 Then, there exists a 3-dimensional cohomology class $k(B,D,\psi)\in
 H^3(Q,\mathrm{Ker}d)$, called the obstruction,
 whose  vanishing  is  necessary and sufficient for there to exist
an extension of $B$ by $Q$ of  type $B\rightarrow D$ inducing
$\psi$. Further, if $k(B,D,\psi)$ vanishes, then the equivalence
classes of such extensions are bijective with
$H^2(Q,\mathrm{Ker}d)$.
\end{hq}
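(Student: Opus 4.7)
The plan is to derive this corollary as the trivial-equivariance specialization of Theorem \ref{dlc}. When $\Gamma = \mathbf{1}$, every $\Gamma$-group is just an ordinary group, every $\Gamma$-crossed module is an ordinary crossed module in the sense of Whitehead, every $\Gamma$-homomorphism is a group homomorphism, and the equivariant cohomology groups $H^n_\Gamma(-,-)$ collapse to the ordinary group cohomology $H^n(-,-)$. Likewise, an equivariant extension of type $B\to D$ inducing $\psi$ reduces to an ordinary extension of the type of a crossed module in the sense of \cite{Br94, Ded, Tay}, and the set $\mathrm{Ext}^{\mathbf{1}}_{B\to D}(Q, B, \psi)$ coincides with the set of equivalence classes of such extensions.

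First I would recall the construction from Section 3: the crossed module $(B, D, d, \vartheta)$ determines a strict graded categorical group $\mathbb P_{B\to D}$ (viewed now with trivial grading), whose homotopy invariants satisfy $\pi_0 \mathbb P_{B\to D} = \mathrm{Coker}\, d$ and $\pi_1 \mathbb P_{B\to D} = \mathrm{Ker}\, d$. Its reduction yields a 3-cocycle $h \in Z^3(\mathrm{Coker}\, d, \mathrm{Ker}\, d)$. Then I would set
\[
k(B, D, \psi) := \overline{\psi^{\ast} h} \in H^3(Q, \mathrm{Ker}\, d),
\]
which matches the obstruction appearing in Theorem \ref{dlc} under the specialization $\Gamma = \mathbf{1}$.

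Next I would invoke Theorem \ref{dlc} directly: the vanishing of $\overline{\psi^{\ast} h}$ in $H^3(Q, \mathrm{Ker}\, d)$ is necessary and sufficient for the existence of an extension of $B$ by $Q$ of type $B \to D$ inducing $\psi$, and when it vanishes, the equivalence classes of such extensions are in bijection with $H^2(Q, \mathrm{Ker}\, d)$, since the bijection from Theorem \ref{dlc} combined with Theorem \ref{schr} gives exactly this classification.

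There is no real obstacle here; the work has been done in Theorem \ref{dlc} and its supporting results (Lemma \ref{mrtc}, Theorem \ref{schr}, Proposition \ref{dl2.2a}). The only thing I would emphasize is to check that all the specializations are canonical: namely, that the $\Gamma$-graded categorical group $_{\mathbf{1}}\mathrm{Dis}\, Q$ is just the discrete categorical group on $Q$, that a $\mathbf{1}$-graded monoidal functor is just an ordinary monoidal functor, and that the 3-cocycle $h$ arising from $\mathbb P_{B\to D}$ agrees with the classical $k$-invariant used by Brown and Mucuk. Once these identifications are noted, the corollary is immediate from Theorem \ref{dlc}.
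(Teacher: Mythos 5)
Your proposal is correct and follows exactly the paper's own route: the corollary is obtained by specializing Theorem \ref{dlc} to $\Gamma=\mathbf{1}$, identifying $k(B,D,\psi)$ with the class $\overline{\psi^{\ast}h}$ coming from the reduction of $\mathbb P_{B\rightarrow D}$ and noting that equivariant cohomology and equivariant extensions collapse to their classical counterparts. No gaps; this matches the paper's argument.
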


For the $\Gamma$-crossed module $(B,\Aut B,\mu,0)$, since $\Coker
\mu=\Out B,\Ker\mu=Z(B)$,  Theorem \ref{dlc} contains Theorem 4.1
\cite{C2002}.
\begin{hq} [Theorem 4.1 \cite{C2002}]\label{hq10'} Let $B,D $ be $\Gamma$-groups
and let $\psi: Q\rightarrow \mathrm{Out}B$ be a
$\Gamma$-homomorphism.
 Then, there exists the obstruction $\Obs(\psi)\in H^3_\Gamma  (Q, Z(B))$
 whose  vanishing  is  necessary and sufficient for there to exist
an equivariant extension of $B$ by $Q$ inducing $\psi$. Further, if
$\Obs(\psi)$ vanishes, then the equivalence classes of such
extensions are bijective with $H^2_\Gamma (Q,Z(B))$.
\end{hq}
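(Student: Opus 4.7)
The plan is to derive this corollary by instantiating Theorem \ref{dlc} at the $\Gamma$-crossed module $\mathcal{M} = (B, \Aut B, \mu, \vartheta)$ of Example 3, where $\mu$ is conjugation and $\vartheta$ is the natural evaluation action of $\Aut B$ on $B$. First I would compute the invariants of this crossed module: since $\mathrm{Im}\,\mu = \mathrm{Inn}(B)$ and $\mathrm{Ker}\,\mu = Z(B)$, Proposition \ref{md2} gives $\pi_0 \mathcal M = \Out B$ and $\pi_1 \mathcal M = Z(B)$. Therefore the reduction of the associated strict $\Gamma$-graded categorical group $\mathbb P_{B \to \Aut B} = \mathrm{Hol}_\Gamma B$ takes the form $\int_\Gamma(\Out B, Z(B), h)$ for a distinguished cohomology class $h \in Z^3_\Gamma(\Out B, Z(B))$, and I define
\[
\Obs(\psi) := \overline{\psi^\ast h}\in H^3_\Gamma(Q, Z(B)).
\]

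The second step, and the crucial one, is to identify the two flavours of extension: the set $\Ext_\Gamma(Q, B)$ of equivalence classes of classical equivariant extensions of $B$ by $Q$ inducing $\psi: Q \to \Out B$ (in the sense of \cite{C2002}) must be put in canonical bijection with $\Ext^\Gamma_{B \to \Aut B}(Q, B, \psi)$. Given a $\Gamma$-equivariant short exact sequence $0 \to B \xrightarrow{j} E \xrightarrow{p} Q \to 1$, I would exhibit the canonical map $\varepsilon: E \to \Aut B$ defined by $\varepsilon(e) = \mu_e$ (conjugation by $e$), which is well defined since $j(B)$ is normal in $E$ and is a $\Gamma$-homomorphism since $\Gamma$ acts on $E$ by automorphisms. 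The pair $(id_B, \varepsilon)$ is a homomorphism of $\Gamma$-crossed modules: axiom $H_1$ is $\varepsilon \circ j = \mu$, which is immediate, and $H_2$ reduces to the tautology that $\varepsilon(e)$ acts on $B$ as conjugation by $e$. Axiom $H_2$ also forces $\varepsilon$ to be unique, so the assignment $\mathcal E \mapsto (\mathcal E, \varepsilon)$ is a bijection, compatible with equivalences and with the induced $\psi$ on cokernels.

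With these two identifications in place, Theorem \ref{dlc} transfers verbatim: the vanishing of $\Obs(\psi) = \overline{\psi^\ast h}$ in $H^3_\Gamma(Q, Z(B))$ is necessary and sufficient for the existence of an equivariant extension of $B$ by $Q$ inducing $\psi$, and in the vanishing case the set of equivalence classes is in bijection with $H^2_\Gamma(Q, Z(B))$.

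The main obstacle I anticipate is the identification of equivalence classes in the second step: while the underlying set-theoretic bijection $\mathcal E \leftrightarrow (\mathcal E, \varepsilon)$ is clear, one must verify that an equivalence of classical extensions (a $\Gamma$-isomorphism $\alpha: E \to E'$ of short exact sequences) automatically intertwines the canonical maps $\varepsilon$ and $\varepsilon'$, so that the bijection descends to equivalence classes. This is a direct consequence of the uniqueness of $\varepsilon$ together with the fact that $\alpha$ is compatible with $j$, but it needs to be recorded explicitly. Everything else is formal substitution into Theorem \ref{dlc}.
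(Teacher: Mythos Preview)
Your proposal is correct and follows the same approach as the paper: specialize Theorem~\ref{dlc} to the $\Gamma$-crossed module $(B,\Aut B,\mu,\vartheta)$, using that $\Coker\mu=\Out B$ and $\Ker\mu=Z(B)$. The paper treats this corollary in one line (the sentence immediately preceding the statement), whereas you additionally spell out the identification of classical equivariant extensions with extensions of type $B\to\Aut B$ via the canonical conjugation map $\varepsilon$; the paper records this identification more tersely just before Corollary~\ref{hq10'} (the first one, corresponding to Theorem~4.2 of \cite{C2002}).
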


\begin{center}
{}
\end{center}


\begin{thebibliography}{}
\bibitem {Baez} J. C. Baez and A. D. Lauda, {\it Higher Dimensional
Algebra V: 2-groups}, Theory Appl. Categ. {\bf 12} (2004) 423-491.

\bibitem {Br76} R. Brown, C. Spencer, {\it $\mathcal G$-groupoids, crossed
modules and the fundamental groupoid of a topological group}, Proc.
Konn. Ned. Akad. v. Wet., {\bf 79}, 296 - 302, 1976.

\bibitem {Br94} R. Brown, O. Mucuk, {\it Covering groups of non-connected topological groups
revisited}, Math. Proc. Camb. Phil. Soc., {\bf 115} (1994) 97-110.



\bibitem {Q2012} M. Calvo, A. M. Cegarra, N. T. Quang, {\it Higher
cohomologies of modules}, Algebr.  Geom. Topol. {\bf 12} (2012)
343-413.

\bibitem {C2001}  A. M. Cegarra, A. R. Garz\'{o}n and J. A. Ortega,  {\it Graded extensions of monoidal categories}, J. Algebra \textbf{241} (2) (2001), 620--657.

\bibitem {C2002} A. M. Cegarra, J. M. Garc\'{i}a - Calcines and J. A. Ortega,
 {\it On graded categorical groups and equivariant group extensions}, Canad. J. Math. \textbf{54} (5) (2002), 970--997.

\bibitem {C20022} A. M. Cegarra, J. M. Garc\'{i}a - Calcines and J. A. Ortega,
{\it Cohomology of groups with operators}, Homology Homotopy Appl. 1
{\bf 4} (2002) 1-23.


\bibitem {Ded} P. Dedecker, {\it Les foncteurs $Ext_\Pi$, $H^2_\Pi$
et $H^2_\Pi$ non ab\'{e}liens}, C. R. Acad. Sci. Paris, {\bf 258},
4891-4894, 1964.

\bibitem{Fro} A. Fr\"{o}hlich and C. T. C. Wall, {\it Graded monoidal categories},
 Compos. Math. {\bf 28} (1974), 229--285.

\bibitem{Gro} A. Grothendieck, {\em Cat\'{e}gories fibr\'{e}es et d\'{e}scente}, (SGA I,
expos\'e VI), Lecture Notes in Math. {\bf 224}, Springer, Berlin,
1971, 145-194.

\bibitem {J-S} A.Joyal, R.Street, {\it Braided tensor categories}, Adv. Math. (1) {\bf 82} (1991) 20-78.

\bibitem {N} B. Noohi, {\it Group cohomology with coefficients in a
crossed-module}, Journal of the Institute of Mathematics of Jussieu,
Volume {\bf 10} - Issue 02, April 2011.


\bibitem {QC} N. T. Quang, P. T. Cuc, {\it Crossed bimodules over rings and Shukla cohomology},
 Math. Commun., {\bf 17} No.2 (2012),
575-598.

\bibitem {QCT} N. T. Quang,  N. T. Thuy, P. T. Cuc, {\it Monoidal functors between (braided)
Gr-categories and their applications}, East-West J. of Mathematics,
Vol. \textbf{13}, No 2 (2011), 163-186.







\bibitem{Tay} R. L. Taylor, {\it Compound group extensions I},
Trans. Amer. Math. Soc. {\bf 75}, 106-135, 1953.

\bibitem{U0} K.-H. Ulbrich, {\it Group cohomology for Picard Categories}, J. Algebra,  {\bf 91} (1984), 464-498.

\bibitem{U} K.-H. Ulbrich, {\it On cohomology of graded group
categories}, Compos. Math., tome {\bf 63}, No. 3 (1987), 409-417.

\bibitem{White49} J. H. C. Whitehead, {\it Combinatorial homotopy II}, Bull. Amer. Math. Soc. {\bf  55} (1949), 453-496.


\end{thebibliography}
\end{document}